\numberwithin{equation}{section}
\theoremstyle{plain}
\newtheorem{theo}{Theorem}[section]
\newtheorem{cor}[theo]{Corollary}
\newtheorem{lem}[theo]{Lemma}
\newtheorem{alg}[theo]{Algorithm}
\newtheorem{prop}[theo]{Proposition}
\theoremstyle{definition}
\newtheorem{defn}[theo]{Definition}
\newtheorem{ex}[theo]{Example}
\title{Flow-up Bases for Generalized Spline Modules on Arbitrary Graphs}
\author{Selma Altinok \and Samet Sarioglan}
\address{Selma Altinok, Hacettepe University Department of Mathematics, 06800 Beytepe Ankara Turkey.}
\email{sbhupal@hacettepe.edu.tr}
\address{Samet Sarioglan (Corresponding author), Hacettepe University Department of Mathematics, 06800 Beytepe Ankara Turkey.}
\email{ssarioglan@hacettepe.edu.tr}
\begin{document}
\begin{abstract}
Let $R$ be a commutative ring with identity. An edge labeled graph is a graph with edges labeled by ideals of $R$. A generalized spline over an edge labeled graph is a vertex labeling by elements of $R$, such that the labels of any two adjacent vertices agree modulo the label associated to the edge connecting them. The set of generalized splines forms a subring and module over $R$. Such a module it is called a generalized spline module. We show the existence of a flow-up basis for the generalized spline module on an edge labeled graph over a principal ideal domain by using a new method based on trails of the graph. We also give an algorithm to determine flow-up bases on arbitrary ordered cycles over any principal ideal domain.
\end{abstract}
\maketitle

\section{Introduction}
\label{intro}

Classical splines are piecewise polynomial functions defined on polyhedral complexes that agree up to a smoothness degree at the intersection of faces. They are used in many areas related with industry, computer based animations and geometric design. Classical spline theory is studied by many mathematicians as Alfeld~\cite{Alf}, Schumaker~\cite{Sch}, Billera~\cite{Bil1,Bil2,Bil}, Rose~\cite{Bil,Ros1,Ros2} and Schenck~\cite{Sch}. While classical splines are defined on polyhedral complexes, generalized splines are defined on edge labeled graphs $(G, \alpha)$ with a base ring $R$, where $G$ is a graph and $\alpha$ is an edge labeling function, introduced by Gilbert, Polster and Tymoczko~\cite{Gil}. Billera and Rose~\cite{Bil2} introduced a description of classical splines in terms of dual graph of a polyhedral complex. This is the starting point of generalized spline theory. The set of generalized splines on an edge labeled graph has a ring structure and $R$-module structure as in the case of classical splines. Here we study the module structure of generalized splines. 

The main problem of the theory of generalized splines is to determine whether generalized spline modules are free or not, and if so, to characterize the bases of generalized spline modules. If $R$ is a principal ideal domain, then the generalized spline module $R_{(G,\alpha)}$ over an arbitrary graph $(G,\alpha)$ has a free $R$-module structure. Also if $G$ is a tree, then $R_{(G,\alpha)}$ is free independently of the base ring. 

A special type of splines, which is called flow-up classes, is a useful tool to find module bases for $R_{(G,\alpha)}$. Flow-up classes are first introduced on cycles by Handschy, Melnick and Reinders in 2014,~\cite{Hand}. They studied integer splines and showed the existence of flow-up classes on cycles over $\mathbb Z$. They also proved that the smallest flow-up classes exist and formed a basis for $\mathbb{Z}_{(C_n , \alpha)}$. The smallest leading entries of flow-up classes have a big role to determine a basis. In~\cite{Tym}, Bowden and Tymoczko showed not only the existence of a certain flow-up classes for any graph over the quotient ring $\mathbb{Z} / m \mathbb{Z}$ but also proved that these flow-up classes form minimum $\mathbb Z$-module generators. In~\cite{Phi}, Philbin and the others studied splines on any connected graph and gave an algorithm to find a minimum flow-up generating set for ${\mathbb{Z} / m \mathbb{Z}}_{(C_n,\alpha)}$. They also extended some of their results to $\mathbb{Z}_{(C_n , \alpha)}$.

In this paper we introduce a method to determine the smallest flow-up classes on an arbitrary graph over a principal ideal domain by using special trails, which is a new approach compare to Bowden and Tymoczko in~\cite{Tym}. In order to do this, we compute the smallest leading entries of flow-up classes by combinatorial techniques. The existence of such flow-up classes can be shown by other methods too. The difference and significance of our approach is that: A basis criteria for $R_{(G,\alpha)}$ can be given by a crucial element $Q_G$ of $R$ which is defined by using trails of $G$ (see~\cite{Alt}). It is shown that $Q_G$ corresponds to the product of these smallest leading entries if $G$ is a cycle, a diamond graph or a tree over a principal ideal domain(PID) $R$.  If $R$ is not a PID, then $R_{(G,\alpha)}$ may not have any flow-up basis even if it is free (see an example in Section~\ref{Sflowup}). Nevertheless the element $Q_G$ gives a basis criteria for generalized spline modules.

In the last section we give an algorithm to determine flow-up bases on arbitrary ordered cycles over principal ideal domains as an application of our trail method.

\section{Generalized Splines}
\label{Gsplines}

In this section, we give some basic definitions and properties of generalized splines.

\begin{defn} Let $G = (V,E)$ be a finite graph, $R$ be a commutative ring with identity and $\alpha : E \to \{ \text{ideals in }R \}$ be a function that labels the edges of $G$ with ideals in $R$. We call the pair $(G, \alpha)$ as an edge labeled graph. The ring $R$ is called the base ring.
\end{defn}

Each edge of $(G,\alpha)$ is labeled with a generator of the ideal $I$ when the corresponding ideal $I$ is principal. Throughout the paper we assume that $G$ is a simple and connected graph.

\begin{defn} A generalized spline on an edge labeled graph $(G, \alpha)$ is a vertex labeling $F \in R ^{|V|}$ such that for each edge $v_i v_j \in E$, we have
\begin{displaymath}
f_{i} - f_{j} \in \alpha(v_i v_j)
\end{displaymath}
where $f_{i}$ denotes the label on vertex $v_i$. The collection of all generalized splines on a base ring $R$ over the edge labeled graph $(G, \alpha)$ is denoted by $R_{(G,\alpha)}$.
\end{defn}
From now on we refer to generalized splines as splines.

Let $(G, \alpha)$ be an edge labeled graph with $n$ vertices. We denote the elements of $R_{(G,\alpha)}$ by column matrix notation with ordering from bottom to top as follows:
\begin{displaymath}
F = \begin{bmatrix} f_n \\ \vdots \\ f_1 \end{bmatrix} \in R_{(G,\alpha)}.
\end{displaymath}
We also use vector notation as $F = (f_1 , \ldots , f_n)$.

\begin{ex} Let $(G, \alpha)$ be as the figure below:
\begin{figure}[H]
\centering
\scalebox{0.16}{\includegraphics{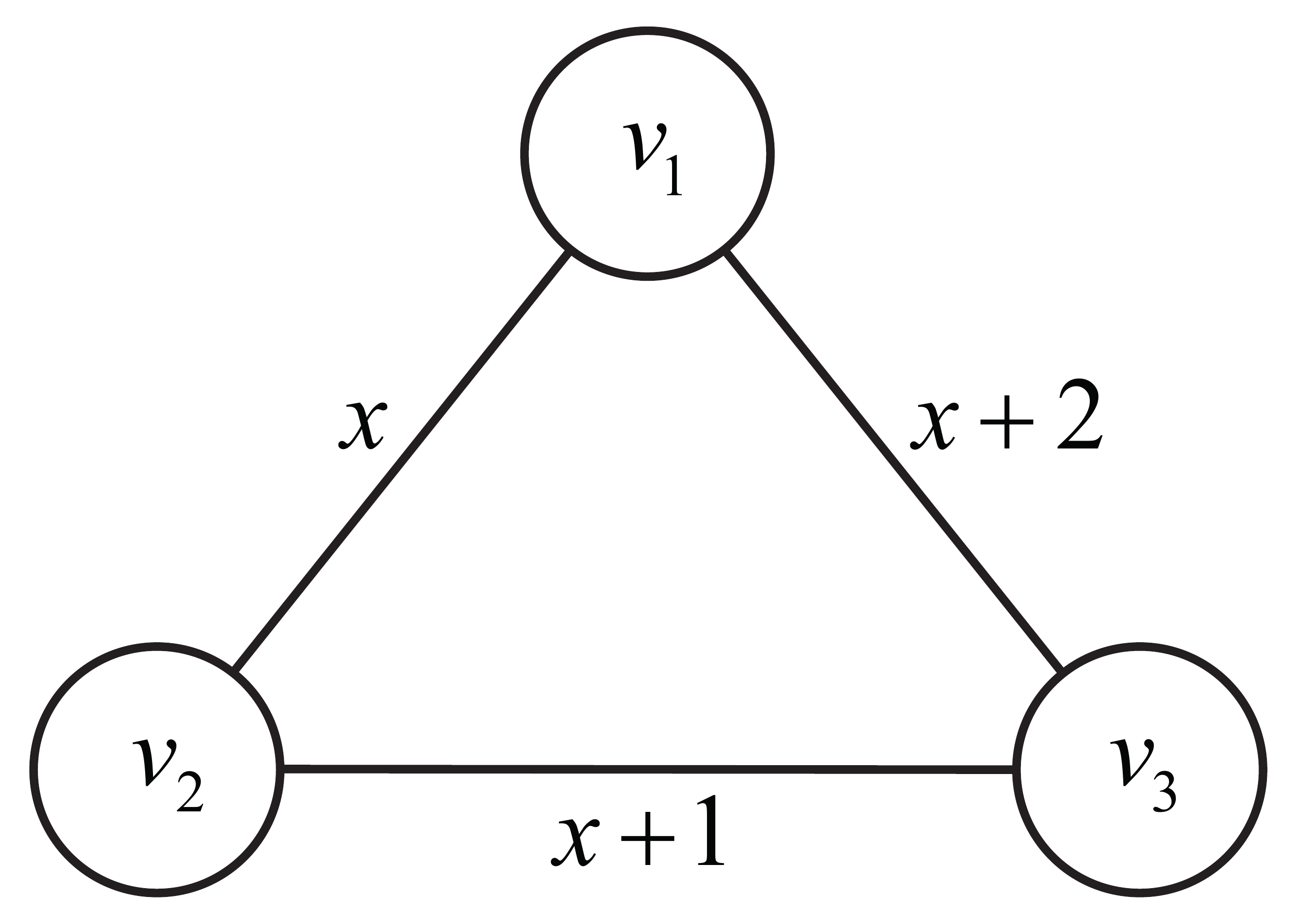}}
\caption{Example of generalized spline}
\label{sp2}
\end{figure}
A spline over $(G, \alpha)$ can be given by
\begin{displaymath}
F = \begin{bmatrix} x^2 + 2x + 1 \\ x+1 \\ 1 \end{bmatrix} .
\end{displaymath}
\end{ex}

If we label all vertices of $(G, \alpha)$ by a fixed $r \in R$, we get a spline since the difference on every edge is zero. We call such splines as trivial splines. Let $e_{ij}$ be the edge that connects the vertices $v_i$ and $v_j$. If $\alpha(e_{ij}) = 1$, then spline condition on $e_{ij}$ holds for any $f_i , f_j \in R$. If $\alpha(e_{ij}) = 0$, then $f_i = f_j$.

If the base ring is an integral domain, then the freeness of $R_{(G,\alpha)}$ can be given by the number of vertices of $G$. To show this, we first need the concept of "rank of a module". The rank of an $R$-module $M$, denoted by $\text{rk } M$, is defined by the maximum number of $R$-linearly independent elements of $M$.

\begin{prop} Let $R$ be an integral domain and $M \subset R^t$ be an $R$-submodule of rank $n$. If $M$ contains a generating set $B$ with $n$ elements, then $M$ is free with basis $B$. 
\label{proprank}
\end{prop}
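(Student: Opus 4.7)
The plan is to prove that the generating set $B = \{b_1, \ldots, b_n\}$ is $R$-linearly independent; combined with the hypothesis that $B$ generates $M$, this will make $B$ a basis and $M$ free of rank $n$. Equivalently, I want to show that the natural surjection $\phi : R^n \to M$ sending $e_i \mapsto b_i$ has trivial kernel.

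I would argue by contradiction. Suppose there is a nontrivial $R$-linear relation $r_1 b_1 + \cdots + r_n b_n = 0$ with, after reindexing, $r_1 \neq 0$. For any $m = \sum s_i b_i \in M$, multiplying by $r_1$ and using the relation to eliminate the $b_1$ term shows that $r_1 m$ lies in the submodule $N = \langle b_2, \ldots, b_n \rangle$, which is generated by only $n-1$ elements. The rank hypothesis supplies $n$ linearly independent elements $c_1, \ldots, c_n \in M$, and because $R$ is an integral domain with $r_1 \neq 0$, the scaled elements $r_1 c_1, \ldots, r_1 c_n$ remain linearly independent. Thus $N$ contains $n$ linearly independent elements while being generated by $n-1$ of them.

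The contradiction is completed by the standard sublemma that an $R$-module generated by $k$ elements has rank at most $k$. For this, if $N = \langle d_1, \ldots, d_{n-1} \rangle$ contained independent $c_1, \ldots, c_n$, the coefficient matrix $T$ of size $(n-1) \times n$ expressing the $c_j$ in terms of the $d_i$ would have a nontrivial kernel vector already over the fraction field $K$ of $R$ by pure linear algebra, and clearing denominators would yield a nontrivial $R$-relation among the $c_j$. I do not anticipate a serious obstacle; the only care-point is the consistent use of the integral domain hypothesis, both to preserve independence after multiplying by $r_1$ and to descend a kernel vector from $K$ back to $R$.
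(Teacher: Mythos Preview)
Your argument is correct: the contradiction via $r_1 M \subseteq \langle b_2,\ldots,b_n\rangle$ works, and the sublemma that a module generated by $k$ elements has rank at most $k$ is established exactly as you say, by producing a kernel vector of the $(n-1)\times n$ coefficient matrix over the fraction field $K$ and clearing denominators. The integral domain hypothesis is used precisely where you flag it.

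The paper does not actually prove this proposition; it simply cites Lemma~8.3 of Billera--Rose~\cite{Bil}. So your write-up is more self-contained than the paper's own treatment. It is worth noting that your argument can be streamlined slightly: since $M\subset R^t$ is torsion-free, one has $\mathrm{rk}\,M=\dim_K(K\otimes_R M)$, and a generating set $B$ of size $n$ for $M$ becomes a spanning set of size $n$ for the $n$-dimensional $K$-vector space $K\otimes_R M$, hence a $K$-basis, hence $R$-linearly independent. This is the same idea as your proof (and presumably the content of the cited lemma), just packaged without the contradiction.
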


\begin{proof} See Lemma 8.3 of~\cite{Bil}.
\end{proof}

\begin{prop} Let $R$ be an integral domain. If $M$ is an $R$-module and $N \subset M$ is a submodule, then $\emph{rk } N \leq \emph{rk } M$.
\label{proprank2}
\end{prop}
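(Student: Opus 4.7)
The plan is to argue directly from the definition of rank stated immediately before Proposition~\ref{proprank}: $\text{rk } N$ is the maximum number of $R$-linearly independent elements of $N$. The entire content of the proposition will then come down to the observation that $R$-linear independence is inherited when passing from a submodule to an ambient module.

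First I would fix any positive integer $k \leq \text{rk } N$ and choose an $R$-linearly independent set $\{n_1,\ldots,n_k\} \subseteq N$. Because the $R$-action on $N$ is by definition the restriction of the $R$-action on $M$, any dependence relation $r_1 n_1 + \cdots + r_k n_k = 0$ holds in $N$ if and only if it holds in $M$. Hence the same tuple is $R$-linearly independent when regarded as a subset of $M$, giving $k \leq \text{rk } M$. Taking the supremum over all such $k$ yields $\text{rk } N \leq \text{rk } M$; if $\text{rk } N$ is infinite, the identical argument forces $\text{rk } M$ to be infinite as well, so the inequality still holds (in the obvious extended sense).

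The main (and essentially only) obstacle here is conceptual rather than technical: one needs to be clear about what the maximum in the definition of rank means for arbitrary modules over an integral domain. The integral-domain hypothesis guarantees that ranks are well-defined nonnegative quantities, for instance via localization at the zero ideal, which turns $M$ and $N$ into a vector space and a subspace over the fraction field $\operatorname{Frac}(R)$; the dimensions then inherit the desired monotonicity. Beyond that observation, the argument is a one-line consequence of the definition and does not require any finiteness assumption on $M$.
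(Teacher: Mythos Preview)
Your proposal is correct and follows essentially the same approach as the paper: pick a linearly independent subset of $N$ of size $\text{rk } N$, observe that it remains linearly independent in $M$, and conclude $\text{rk } N \leq \text{rk } M$. The paper's version is simply the terse form of your argument, omitting your side remarks about the infinite case and localization.
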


\begin{proof} Say $\text{rk } N = n$ and let $B = \{ b_1 , \ldots , b_n \} \subset N$ be a maximal linearly independent subset of $N$. Since $B \subset N \subset M$, $B$ is linearly independent in $M$. So $n \leq \text{rk } M$ by the definition of the rank of $M$.
\end{proof}

\begin{theo} If $R$ is an integral domain and $G$ is a graph with $n$ vertices, then $\emph{rk } R_{(G,\alpha)} = n$.
\label{theorank}
\end{theo}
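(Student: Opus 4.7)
The plan is to establish $\text{rk } R_{(G,\alpha)} = n$ by proving matching upper and lower bounds.

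For the upper bound, note that by definition every spline lies in $R^{|V|} = R^n$, so $R_{(G,\alpha)}$ is a submodule of $R^n$. Since $R^n$ is free of rank $n$ over the integral domain $R$, Proposition~\ref{proprank2} immediately yields $\text{rk } R_{(G,\alpha)} \leq n$.

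For the lower bound I would produce $n$ explicit $R$-linearly independent splines. The natural candidates are the single-vertex supported labelings $F_i$ whose only nonzero entry $c_i$ sits in the $i$-th position. Because $F_i$ vanishes off $v_i$, the spline condition is trivial on every edge not incident to $v_i$, and on an edge $e = v_i v_j$ it reduces to $c_i \in \alpha(e)$. I would therefore take $c_i$ to be the product of a chosen nonzero element from each $\alpha(e)$ with $e$ incident to $v_i$; such a product automatically lies in the required intersection of edge labels, and because $R$ is an integral domain it remains nonzero. Linear independence is then transparent: in the coordinate matrix of $F_1, \ldots, F_n$ the row for $F_i$ has $c_i \neq 0$ in position $i$ and zeros elsewhere, so any relation $\sum r_i F_i = 0$ forces $r_i c_i = 0$ and hence $r_i = 0$. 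This exhibits $n$ independent elements and gives $\text{rk } R_{(G,\alpha)} \geq n$.

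The delicate step is the construction of the $c_i$: it tacitly uses that every $\alpha(e)$ is a nonzero ideal, so that a nonzero element can be drawn from each factor. If the zero ideal were permitted as an edge label, the corresponding spline condition would force the two endpoint labels to coincide and $R_{(G,\alpha)}$ would embed into a proper submodule of $R^n$, potentially dropping the rank below $n$. I would therefore work under the (implicit) convention that edge labels are nonzero, consistent with the rest of the paper, and combine the two bounds to conclude $\text{rk } R_{(G,\alpha)} = n$.
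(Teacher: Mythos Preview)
Your proof is correct and follows essentially the same strategy as the paper: the upper bound via Proposition~\ref{proprank2} and the lower bound by exhibiting $n$ linearly independent splines. The paper invokes general flow-up classes (built from the product $L$ of all edge labels) rather than your single-vertex supported labelings, but the latter are themselves particular flow-up classes, so the arguments coincide---and your explicit remark on the nonzero-edge-label hypothesis is in fact more careful than the paper's own treatment, which tacitly needs $L \neq 0$ for the same reason.
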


\begin{proof} Since $R_{(G,\alpha)} \subset R^n$ as an $R$-submodule, $\text{rk } R_{(G,\alpha)} \leq n$ by Proposition~\ref{proprank2}. Moreover there exists a set of flow-up classes $A = \{ F^{(1)} , \ldots , F^{(n)} \} \subset R_{(G,\alpha)}$, which is linearly independent since $R$ is an integral domain. Hence $\text{rk } R_{(G,\alpha)} \geq n$ by definition of rank and thus we have $\text{rk } R_{(G,\alpha)} = n$.
\end{proof}

\begin{theo} Let $R$ be an integral domain and $G$ be a graph with $n$ vertices.  The module $R_{(G,\alpha)}$ has a generating set $B$ with $n$ elements if and only if 
it is free.
\label{ranktheo2}
\end{theo}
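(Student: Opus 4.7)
The plan is to derive this theorem as a direct consequence of the two results already stated: Proposition~\ref{proprank} and Theorem~\ref{theorank}. Both directions are short once these tools are in place, so the proof is really a bookkeeping exercise rather than requiring any new idea.

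For the ($\Leftarrow$) direction, I would assume $R_{(G,\alpha)}$ is free. Since $R_{(G,\alpha)} \subset R^n$ and $R$ is an integral domain, Theorem~\ref{theorank} gives $\operatorname{rk} R_{(G,\alpha)} = n$. Because $R$ is an integral domain (hence commutative and nonzero), any free module has a well-defined rank equal to the cardinality of any of its bases. Thus a basis of $R_{(G,\alpha)}$ is a generating set with exactly $n$ elements, giving the desired $B$.

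For the ($\Rightarrow$) direction, I would assume $R_{(G,\alpha)}$ has a generating set $B$ with $n$ elements. Viewing $R_{(G,\alpha)}$ as an $R$-submodule of $R^n$, Theorem~\ref{theorank} again tells us its rank is $n$. Proposition~\ref{proprank} then applies directly: an $R$-submodule of $R^n$ of rank $n$ that contains a generating set of size $n$ is free on that generating set. Hence $B$ is a basis and $R_{(G,\alpha)}$ is free.

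There is no real obstacle here; the only subtlety is making sure the hypothesis of Proposition~\ref{proprank} (rank equal to the size of the generating set) is matched up with the output of Theorem~\ref{theorank} (rank equal to $n$), and that in the other direction one invokes the invariance of basis size for free modules over an integral domain. Both are standard and both have already been set up in the excerpt.
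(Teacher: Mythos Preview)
Your proposal is correct and follows essentially the same approach as the paper: both directions are obtained by combining Theorem~\ref{theorank} (which gives $\operatorname{rk} R_{(G,\alpha)} = n$) with Proposition~\ref{proprank} for the forward implication and with the invariance of basis cardinality over an integral domain for the reverse. Your write-up is in fact slightly more explicit than the paper's, which simply cites these two results without spelling out the role of each.
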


\begin{proof} If we assume that $R_{(G,\alpha)}$ has a generating set $B$ with $n$ elements,  being free follows directly from Theorem~\ref{theorank} and Proposition~\ref{proprank}.
Now let $R_{(G,\alpha)}$ be free. By Theorem\ref{theorank}, it has  rank $n$. Therefore there exists a generating set (actually a basis) for $R_{(G,\alpha)}$ with $n$ elements.\end{proof}

We introduce a special type of splines, which is called flow-up classes, is a useful tool to find $R$-module bases for $R_{(G,\alpha)}$ below.

\begin{defn} Let $(G, \alpha)$ be an edge labeled graph with $n$ vertices. Fix $i$ with $1 \leq i \leq n$. A flow-up class $F^{(i)}$ is a spline in $R_{(G,\alpha)}$ with first $i-1$ leading zeros, that is, the components $F^{(i)}_i \neq 0$ and $F^{(i)}_j = 0$ for all $j < i$. The set of all $i$-th flow-up classes is denoted by $\mathscr{F}_i$.
\end{defn}

\begin{ex} Let $(G, \alpha)$ be as in the Figure~\ref{sp2}. Example of flow-up classes on $(G, \alpha)$ can be given as
\begin{displaymath}
F^{(1)} = \begin{bmatrix} 1 \\ 1 \\ 1 \end{bmatrix}, F^{(2)} = \begin{bmatrix} -x-2 \\ x \\ 0 \end{bmatrix}, F^{(3)} = \begin{bmatrix} x^2 + 3x +2 \\ 0 \\ 0 \end{bmatrix}.
\end{displaymath}
\end{ex}

It can be easily observed that flow-up classes for all $i$ exist. To see this, let $L$ be the product of all edge labels on $(G, \alpha)$. For all $1 \leq i \leq n$, define a labeling on the vertices of $(G, \alpha)$ with $f_j = 0$ for $j < i$ and $f_t = L$ for $t \geq i$. Hence the resulting labeling gives an element of $\mathscr{F}_i$. This construction of flow-up classes is very trivial and not so useful. We will give another method to construct special types of flow-up classes.

The next theorem shows whenever flow up classes form a basis.

\begin{theo} \emph{\cite{Bow}} Let $R$ be integers and $(G,\alpha)$ be an edge labeled graph with $n$ vertices. The following statements are equivalent:
\begin{enumerate}[label=\emph{(\alph*)}]
	\item The set $\{ F^{(1)} , F^{(2)} , \ldots , F^{(n)}\}$ forms a flow-up basis for $R_{(G,\alpha)}$.
	\item For each flow-up class $G^{(i)} = (0 , \ldots , 0 , g_i , g_{i+1} , \ldots , g_n)$, the entry $g_i$ is a multiple of the entry $F^{(i)} _ {i}$.
\end{enumerate}
\label{basis}
\end{theo}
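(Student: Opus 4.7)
The plan is to prove the two implications separately, each time exploiting the fact that the defining property $F^{(j)}_j \neq 0$ of a flow-up class lets us compare entries one position at a time, starting from the top.

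For (a) $\Rightarrow$ (b), I would take an arbitrary flow-up class $G^{(i)} = (0,\ldots,0,g_i,g_{i+1},\ldots,g_n)$ and expand it in the assumed basis as $G^{(i)} = \sum_{j=1}^n c_j F^{(j)}$ with $c_j \in R$. Reading off position $1$ of this expansion gives $0 = c_1 F^{(1)}_1$, since every $F^{(j)}$ with $j \geq 2$ vanishes in position $1$; as $R$ is an integral domain and $F^{(1)}_1 \neq 0$, this forces $c_1 = 0$. A straightforward induction on $j$ yields $c_j = 0$ for all $j < i$, after which position $i$ of the expansion reads $g_i = c_i F^{(i)}_i$, as required.

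For (b) $\Rightarrow$ (a), I would show that $\{F^{(1)},\ldots,F^{(n)}\}$ spans $R_{(G,\alpha)}$ by a leading-entry reduction. Given $H \in R_{(G,\alpha)}$, let $i$ be the smallest index with $H_i \neq 0$; then $H$ itself is a flow-up class in $\mathscr{F}_i$, so hypothesis (b) supplies $k \in R$ with $H_i = k F^{(i)}_i$. The spline $H - k F^{(i)}$ still vanishes in positions $1,\ldots,i-1$ and now also in position $i$, so its first nonzero position has strictly increased. Iterating at most $n$ times produces the zero spline and expresses $H$ as an $R$-linear combination of the $F^{(j)}$. Since $\text{rk}\, R_{(G,\alpha)} = n$ by Theorem~\ref{theorank}, Proposition~\ref{proprank} promotes this spanning set of size $n$ to a basis.

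The main obstacle is verifying that the reduction in (b) $\Rightarrow$ (a) really terminates and that hypothesis (b) remains applicable at every stage. This requires the observation that each intermediate residual is itself a flow-up class (lying in $\mathscr{F}_{i'}$ for some $i' > i$), so that (b) can be invoked again, together with the fact that the strict increase in the index of the first nonzero entry bounds the number of reduction steps by $n$.
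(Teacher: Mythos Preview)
Your argument is correct: the forward direction is the usual triangular read-off of coefficients, and the backward direction is the standard leading-entry reduction, with linear independence handled cleanly via Theorem~\ref{theorank} and Proposition~\ref{proprank}. Note, however, that the paper does not supply its own proof of this statement---it simply refers the reader to Theorem~3.1 of~\cite{Bow}---so there is no in-paper argument to compare against; your write-up is essentially what one expects that reference to contain.
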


\begin{proof} See Theorem 3.1 in ~\cite{Bow}.
\end{proof}

Theorem~\ref{basis} shows that the leading entries of flow-up classes has a big role to determine whether a set of flow-up classes forms a basis or not. We define certain trails to determine such leading entries in the next section.

\section{Existence of Special Flow-up Classes}
\label{Sflowup}
 In this section we introduce special trails to determine the smallest leading entries of flow-up classes over an integral domain $R$. We show that we can construct flow-up classes with smallest leading entries when $R$ is a PID.

\subsection{Trails}

\begin{defn} Let $(G,\alpha)$ be an edge labeled graph with $n$ vertices. A trail is a sequence of vertices and edges $v_{i_0} , e_{i_1} , v_{i_1}, \ldots , e_{i_k} , v_{i_k}$ in which no edge is repeated. If an edge $e_{i_j} = v_{i_{j-1}} v_{i_j}$  is labeled by $l_{i_j}$ we shorten a trail notation to $l_{i_1} l_{i_2} \ldots l_{i_k}$. For a fixed vertex $v_i$, a trail $\textbf{p} ^{(i,j)}$ that connects $v_i$ to a vertex $v_j$ is called a $v_j$-trail of $v_i$. If $v_j$ is labeled by zero, then $\textbf{p} ^{(i,j)}$ is called a zero trail of $v_i$. A zero trail of $v_i$ is denoted by $\textbf{p} ^{(i,0)}$ if the vertex index $j$ is not important.
\end{defn}

Let $\textbf{p} ^{(i,j)}$ be a $v_j$-trail of $v_i$. We use the notation $\big( \textbf{p} ^{(i,j)} \big)$ for the greatest common divisor of edge labels on $\textbf{p} ^{(i,j)}$ and $[ \text{ } ]$ for the least common multiple. We denote the set of greatest common divisors of edge labels on $v_j$-trails of $v_i$ by $\big\{\big( \textbf{p} ^{(i,j)} \big) \big\}$.

\begin{ex} Let $(G, \alpha)$ be the edge labeled graph in Figure~\ref{sp6} and $(0,0,f_3,f_4,f_5) \in \mathscr{F}_3$. The red and blue arrays illustrates the zero trails of $v_3$. 

\begin{figure}[H]
\begin{center}
\scalebox{0.16}{\includegraphics{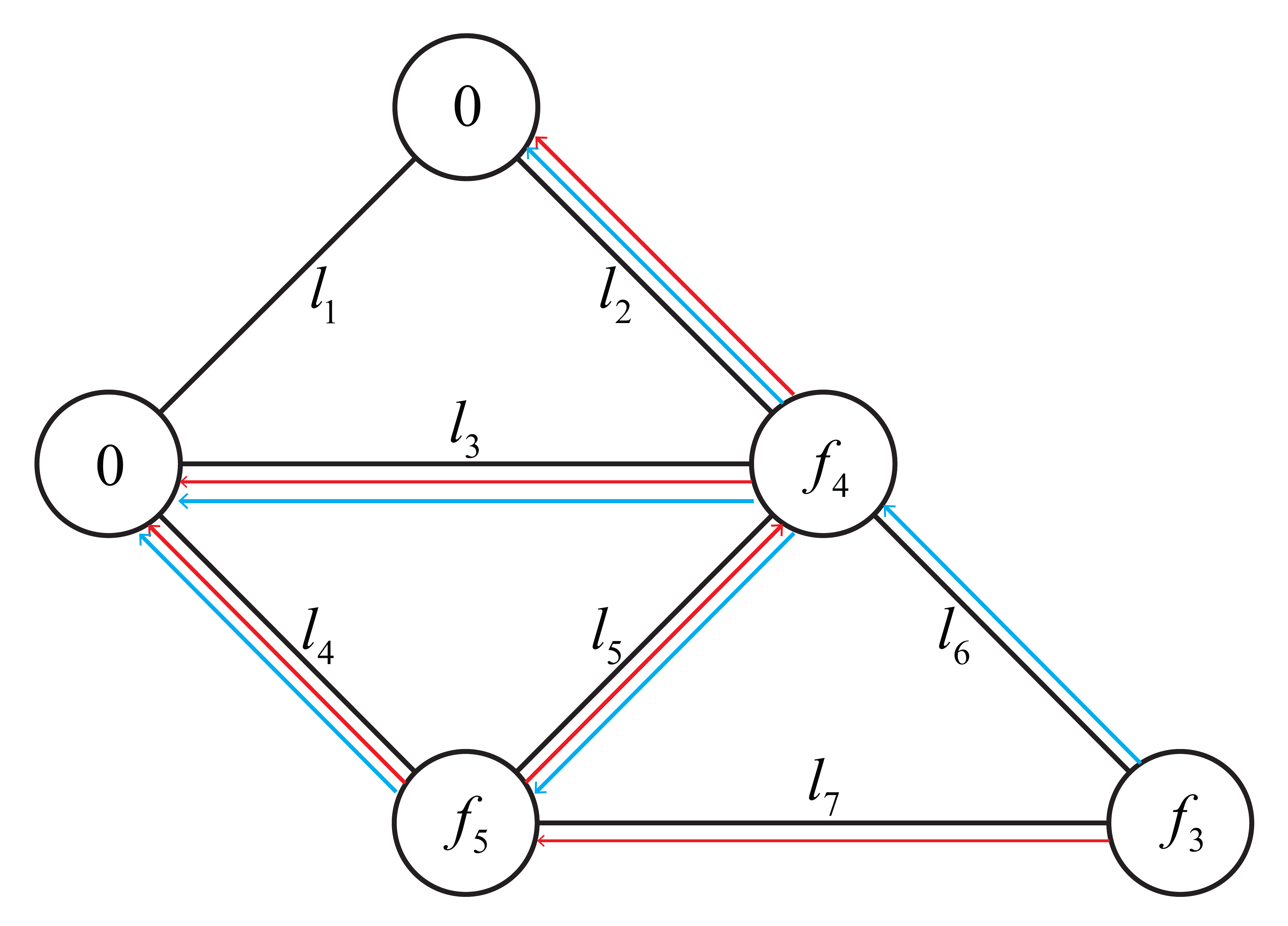}}
\caption{Zero trails}
\label{sp6}
\end{center}
\end{figure}

The zero trails of $v_3$ are listed below:

\begin{displaymath}
\begin{gathered}
\textbf{p} ^{(3,0)}_1 = l_7 l_4, \quad 
\textbf{p} ^{(3,0)}_2 = l_7 l_5 l_3, \quad
\textbf{p} ^{(3,0)}_3 = l_7 l_5 l_2, \quad
\textbf{p} ^{(3,0)}_4 = l_6 l_3, \quad
\textbf{p} ^{(3,0)}_5 = l_6 l_2, \quad
\textbf{p} ^{(3,0)}_6 = l_6 l_5 l_4.
\end{gathered}
\end{displaymath}

\noindent
By notation above, 
\begin{align*}
\big\{\big( \textbf{p} ^{(3,0)} \big) \big\} &= \big\{ \big(\textbf{p} ^{(3,0)}_1\big),\big(\textbf{p} ^{(3,0)}_2),\big(\textbf{p} ^{(3,0)}_3\big),\big(\textbf{p} ^{(3,0)}_4\big),\big(\textbf{p} ^{(3,0)}_5\big),\textbf{p} ^{(3,0)}_6\big)\big\}\\ &=\big\{ (l_7,l_4), (l_7,l_5,l_3), (l_7,l_5,l_2), (l_6,l_3), (l_6,l_2), (l_6,l_5,l_4) \big\}.
\end{align*}

Consider the spline conditions induced by zero trails. For instance, for the zero trail $l_7 l_4$, we have the following conditions.
\begin{displaymath}
\begin{aligned}
f_3 &\equiv f_5 \text{ mod } l_7 \\
f_5 &\equiv 0 \text{ mod } l_4.
\end{aligned}
\end{displaymath}
It implies that
\begin{displaymath}
\begin{aligned}
f_5 &= k_4 l_4 \\
f_3 &= f_5 + k_7 l_7 = k_4 l_4 + k_7 l_7
\end{aligned}
\end{displaymath}
for some $k_4 , k_7 \in R$. Hence $(l_7,l_4)$ divides $k_4 l_4 + k_7 l_7 = f_3$. This holds also for other zero trails of $v_3$.
\end{ex}

This observation leads us to the following proposition.

\begin{prop} Let $(G,\alpha)$ be an edge labeled graph with $n$ vertices and let $F^{(i)} = (0, \ldots ,0, f_i ,\ldots , f_n)\linebreak \in \mathscr{F}_i$  with $i > 1$. Let $v_j$ be a vertex with $j \geq i$ and let $\emph{\textbf{p}} ^{(j,0)}$ be an arbitrary zero trail of $v_j$. Then $\big( \emph{\textbf{p}} ^{(j,0)} \big)$ divides $f_j$.
\label{prop41}
\end{prop}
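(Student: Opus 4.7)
My plan is to walk along the trail $\textbf{p}^{(j,0)}$ starting from its zero endpoint and moving back toward $v_j$, peeling off one spline congruence at each step, so that $f_j$ ends up as an explicit $R$-linear combination of the edge labels appearing on the trail. The divisibility then follows for free from the definition of greatest common divisor.

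In more detail, I would first set up notation: write the zero trail as $v_j = v_{i_0},\, e_{i_1},\, v_{i_1},\, \ldots,\, e_{i_k},\, v_{i_k}$, where $e_{i_t}$ joins $v_{i_{t-1}}$ to $v_{i_t}$ and carries the ideal generator $l_{i_t}$, and where $v_{i_k}$ is the zero-labeled endpoint. Because $F^{(i)}$ has its first $i-1$ entries equal to zero, the hypothesis that $v_{i_k}$ is zero-labeled under $F^{(i)}$ forces $f_{i_k} = 0$.

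Next I would run a short induction on the trail length from the endpoint back to $v_j$. The spline condition on $e_{i_k}$ gives $f_{i_{k-1}} - f_{i_k} \in (l_{i_k})$, and since $f_{i_k}=0$ we can write $f_{i_{k-1}} = a_k\, l_{i_k}$ for some $a_k \in R$. In general, the spline condition on $e_{i_t}$ yields $f_{i_{t-1}} = f_{i_t} + a_t\, l_{i_t}$ for some $a_t \in R$, so telescoping produces
\[
f_j \;=\; f_{i_0} \;=\; \sum_{t=1}^{k} a_t\, l_{i_t}
\]
for suitable $a_1,\dots,a_k \in R$. Since $\bigl(\textbf{p}^{(j,0)}\bigr) = \gcd(l_{i_1},\ldots,l_{i_k})$ divides every $l_{i_t}$, it divides the displayed linear combination, hence divides $f_j$, which is exactly the claim.

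No step feels like a real obstacle; the whole argument is essentially a careful unwinding of the spline congruence applied edge by edge. The only point that requires attention is the bridge between the combinatorial notion of a \emph{zero} trail and the algebraic input we use, namely verifying that the endpoint $v_{i_k}$ truly satisfies $f_{i_k}=0$ under the flow-up class $F^{(i)}$. Once that identification is made, the rest is telescoping and a one-line gcd argument.
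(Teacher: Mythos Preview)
Your proof is correct and matches the paper's own approach: the paper does not supply a formal proof of this proposition but instead gives an illustrative example immediately preceding it, in which a zero trail $l_7 l_4$ is walked edge by edge to produce $f_3 = k_4 l_4 + k_7 l_7$, and the statement ``This observation leads us to the following proposition'' indicates that the general argument is precisely the telescoping you wrote out. Your write-up is simply the clean inductive formalization of that example.
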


\begin{cor} Let $F^{(i)} = (0, \ldots ,0, f_i , \ldots , f_n) \in \mathscr{F}_i$ with $i > 1$ on an edge labeled graph with $n$ vertices. Let $v_j$ be a vertex with $j \geq i$ and let $\emph{\textbf{p}} ^{(j,0)}_1, \ldots , \emph{\textbf{p}} ^{(j,0)}_t$ denote zero trails of $v_j$. Then
\begin{displaymath}
\big[ \big\{ \big( \emph{\textbf{p}} ^{(j,0)}_k \big) | 1 \leq k \leq t \big\} \big]
\end{displaymath}
divides $f_j$.
\label{cor48}
\end{cor}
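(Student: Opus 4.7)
The plan is to deduce this corollary from Proposition~\ref{prop41} by a single appeal to the universal property of the least common multiple, so the argument should be short.

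First I would invoke Proposition~\ref{prop41} for each individual zero trail in the given list. Each application yields that $\big(\mathbf{p}^{(j,0)}_k\big)$ divides $f_j$, so after ranging over all $k$ with $1\leq k\leq t$, the element $f_j$ is exhibited as a common multiple of the finite set
\[
\big\{\big(\mathbf{p}^{(j,0)}_k\big) : 1 \leq k \leq t\big\}\subseteq R.
\]

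Second I would use the defining property of the lcm: in a ring where the lcm of a finite family of elements exists, any common multiple of that family is divisible by the lcm. This existence is automatic in the present setting (the section assumes $R$ to be an integral domain and the main existence result takes $R$ a PID, where lcms of finite subsets are well-defined). Hence I can conclude that $\big[\big\{\big(\mathbf{p}^{(j,0)}_k\big) : 1 \leq k \leq t\big\}\big]$ divides $f_j$, which is exactly the claim.

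I do not foresee any real obstacle: the corollary is essentially the packaging of the trail-wise divisibility relations given by Proposition~\ref{prop41} into a single combined divisibility statement, and the only step that requires a comment rather than a calculation is the well-definedness of lcms in $R$.
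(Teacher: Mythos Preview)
Your proposal is correct and matches the paper's approach: the paper states the result as an immediate corollary of Proposition~\ref{prop41} without giving a separate proof, precisely because it follows by applying that proposition to each zero trail and then invoking the defining property of the least common multiple. There is nothing to add.
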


In general, we may not find a flow-up class $F^{(i)} \in \mathscr{F}_i$ with leading element $f_i = \big[ \big\{  \big( \textbf{p} ^{(i,0)} \big) \big\} \big]$. If such a flow-up class exists, then $f_i$ is called the smallest leading entry of the elements of $\mathscr{F}_i$. If $i = 1$, then we can set the smallest leading entry of the elements of $\mathscr{F}_1$ as $1$. Notice that the smallest leading entry satisfies the condition in Theorem~\ref{basis} (b) for all $i$. As we will prove at the end of this section; we can always construct flow-up classes $F^{(i)} \in \mathscr{F}_i$ with the smallest leading entry if the base ring $R$ is a PID. So in this case, zero trails are sufficient to determine the leading entries. For $j > i$ they are not enough to determine the other entries $f_j$ of a flow-up class $F^{(i)}$. In order to illustrate this fact, consider the following example.

\begin{ex} Let $(G,\alpha)$ be an edge labeled graph as in Figure~\ref{sp64} and let $F^{(3)}= (0,0,g_3 , g_4 , g_5 , g_6, g_7)$ be a flow-up class with the smallest leading entry. We try to determine the entries of $F^{(3)}$.

\begin{figure}[H]
\begin{center}
\scalebox{0.16}{\includegraphics{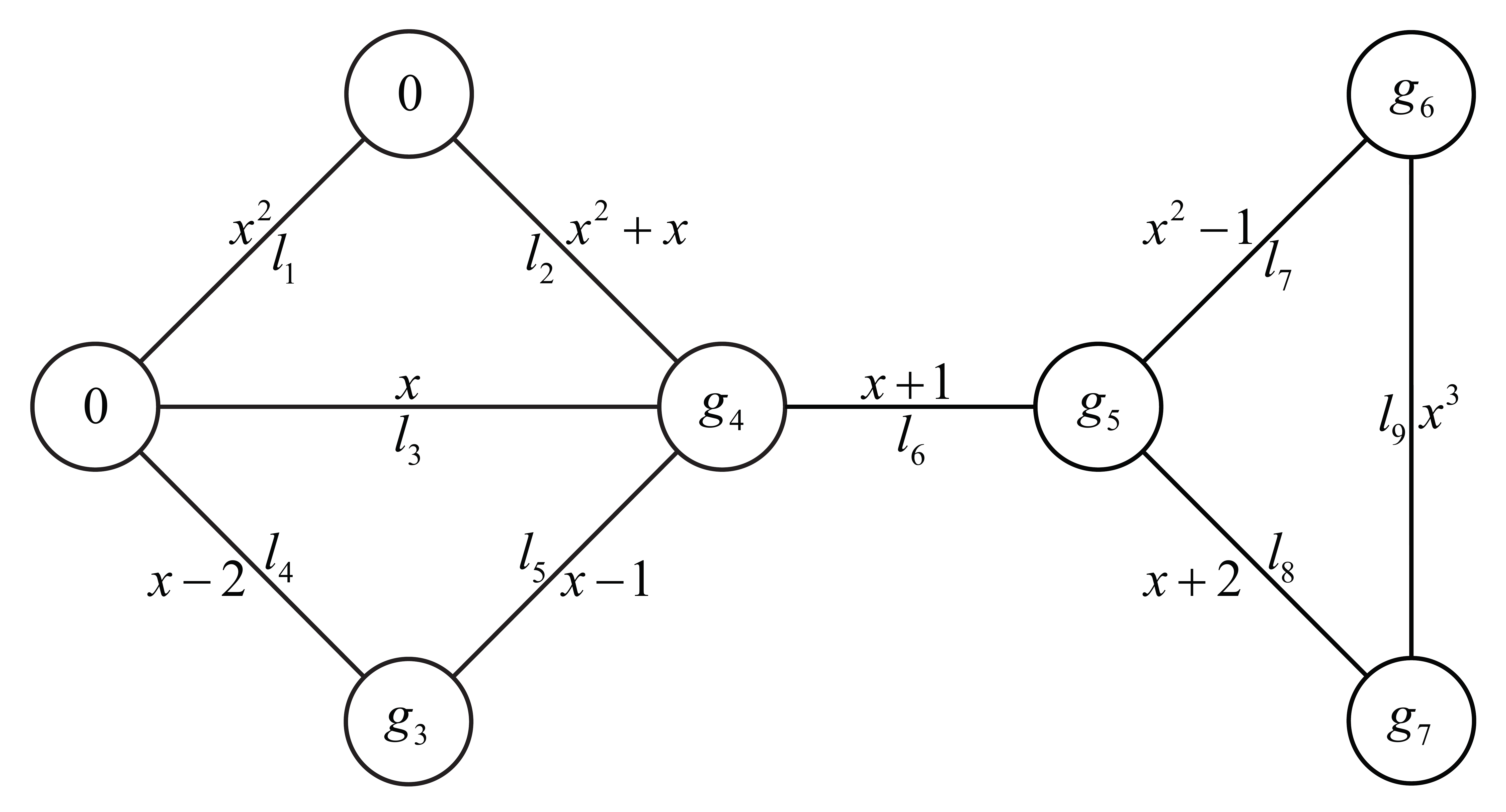}}
\caption{Examle~\ref{exxx}}
\label{sp64}
\end{center}
\end{figure}

\noindent
Zero trails of $v_3$ are $l_4$, $l_5 l_3$ and $l_5 l_2$. So by Proposition~\ref{prop41},
\begin{displaymath}
\begin{aligned}
x - 2 &\text{ $\big\vert$ } g_3 \\
\big( x-1 , x \big) = 1 &\text{ $\big\vert$ } g_3 \\
\big( x-1 , x^2 + x \big) = 1 &\text{ $\big\vert$ } g_3
\end{aligned}
\end{displaymath}
and the smallest value for $g_3 = \big[ x-2 , 1 , 1 \big] = x - 2$.

Now we try to determine $g_4$ by zero trails. Zero trails of $g_4$ are $l_2$, $l_3$ and $l_5 l_4$. So
\begin{displaymath}
\begin{aligned}
x^2 + x &\text{ $\big\vert$ } g_4 \\
x &\text{ $\big\vert$ } g_4 \\
\big( x-1 , x - 2 \big) = 1 &\text{ $\big\vert$ } g_4
\end{aligned}
\end{displaymath}
and then $\big[ x^2 + x , x \big] = x^2 + x$ divides $g_4$. But if we choose $g_4 = x^2 + x$, then this $g_4$ does not satisfy spline conditions since $x^2 + x \not\equiv x - 2 \text{ mod } x - 1$, namely $g_4 \not\equiv g_3 \text{ mod } l_5$. Here we also need to consider $v_3$-trails together with zero trails of $v_4$ to determine $g_4$. The $v_3$-trails of $v_4$ are $l_5$, $l_3 l_4$ and $l_2 l_1 l_4$. So 
\begin{displaymath}
\begin{aligned}
g_4 &\equiv g_3 \text{ mod } l_5 \\
g_4 &\equiv g_3 \text{ mod } \big( l_3 , l_4 \big) \\
g_4 &\equiv g_3 \text{ mod } \big( l_2 , l_1 , l_4 \big) .
\end{aligned}
\end{displaymath}
Hence $g_4 \equiv x - 2 \text{ mod } x - 1$. Together with the factor of $g_4$ given by zero trails of $v_4$, we have
\begin{displaymath}
\begin{aligned}
g_4 &\equiv 0 \text{ mod } x^2 + x \\
g_4 &\equiv x - 2 \text{ mod } x - 1 .
\end{aligned}
\end{displaymath}
Here $g_4 = -(x^2 + x)/2$ satisfies these conditions.

It is sufficient to check just $v_4$-trails of $v_5$ to assign $g_5$ by Lemma~\ref{sufflem2} since all zero trails and $v_3$-trails of $v_5$ passes through $v_4$. Moreover we can consider only $l_6$ instead of all $v_4$-trails of $v_5$ by Lemma~\ref{sufflem}. Hence
\begin{displaymath}
g_5 \equiv g_4 \text{ mod } x+1
\end{displaymath}
and we can assign $g_5 = x+1$.

In order to find $g_6$, we only need $v_5$-trails of $v_6$ by Lemma~\ref{sufflem2} again. Thus
\begin{displaymath}
g_6 \equiv g_5 \text{ mod } \big[ x^2 - 1 , (x^3 , x+2)  \big]
\end{displaymath}
and we can assign $g_6 = 0$.

Finally we have to consider $v_5$-trails and $v_6$-trails of $v_7$ to determine $g_7$.
\begin{displaymath}
\begin{aligned}
g_7 &\equiv g_5 \text{ mod } x + 2 \\
g_7 &\equiv g_6 \text{ mod } x^3 .
\end{aligned}
\end{displaymath}
and we can assign $g_7 = x^3 / 8$.
\label{exxx}
\end{ex}

While determining entries of a flow-up class $F^{(i)}$ with $i \geq 2$ we first set $f_i = \big[ \big\{  \big( \textbf{p} ^{(i,0)} \big) \big\} \big]$. Then we determine $f_{i+1} , \ldots , f_n$ inductively by considering trails to vertices with smaller indices. The following two lemmas show that we do not have to consider all $v_j$-trails of a vertex $v_k$ to determine $f_k$ with $i \leq k$.

\begin{lem} Let $(G,\alpha)$ be an edge labeled graph and let $v_i , v_j \in V(G)$ with $j < i$. Let $\emph{\textbf{p}} ^{(i,j)} _1$ and $\emph{\textbf{p}} ^{(i,j)} _2$ be two $v_j$-trails of $v_i$ with $\emph{\textbf{p}} ^{(i,j)} _1 \subset \emph{\textbf{p}} ^{(i,j)} _2$. Then we do not need to consider $\emph{\textbf{p}} ^{(i,j)} _2$ to determine $f_i$.
\label{sufflem}
\end{lem}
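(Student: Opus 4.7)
My plan is to reduce the lemma to a short divisibility statement: the only constraint a $v_j$-trail $\mathbf{p}^{(i,j)}$ places on $f_i$ (once $f_j$ is fixed) is the single congruence $f_i \equiv f_j \pmod{\big(\mathbf{p}^{(i,j)}\big)}$, and the hypothesis $\mathbf{p}^{(i,j)}_1 \subset \mathbf{p}^{(i,j)}_2$ will force $\big(\mathbf{p}^{(i,j)}_2\big) \mid \big(\mathbf{p}^{(i,j)}_1\big)$, so the congruence from $\mathbf{p}^{(i,j)}_1$ automatically implies the one from $\mathbf{p}^{(i,j)}_2$.

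First I would make the ``trail condition'' explicit. Writing $\mathbf{p}^{(i,j)}$ as $v_i = v_{k_0}, e_1, v_{k_1}, \ldots, e_t, v_{k_t} = v_j$ with $\alpha(e_s) = l_s$, the spline conditions along consecutive vertices give $f_{k_{s-1}} - f_{k_s} \in (l_s)$ for every $s$, and telescoping yields
\begin{displaymath}
f_i - f_j = \sum_{s=1}^{t} (f_{k_{s-1}} - f_{k_s}) \in (l_1, l_2, \ldots, l_t) = \big(\mathbf{p}^{(i,j)}\big),
\end{displaymath}
the last equality holding in a PID since the ideal generated by $l_1, \ldots, l_t$ is principal and generated by their gcd. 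So the information a trail contributes toward determining $f_i$ is exactly membership in the coset $f_j + \big(\mathbf{p}^{(i,j)}\big)$.

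Next I would translate the inclusion hypothesis into divisibility of ideals. The statement $\mathbf{p}^{(i,j)}_1 \subset \mathbf{p}^{(i,j)}_2$ means that every edge of $\mathbf{p}^{(i,j)}_1$ is an edge of $\mathbf{p}^{(i,j)}_2$, hence the multiset of edge labels appearing in $\mathbf{p}^{(i,j)}_1$ is a sub-collection of that appearing in $\mathbf{p}^{(i,j)}_2$. A gcd taken over a smaller collection is a multiple of the gcd over the larger collection, so $\big(\mathbf{p}^{(i,j)}_2\big) \mid \big(\mathbf{p}^{(i,j)}_1\big)$. Combined with the first step, any $f_i$ satisfying $f_i \equiv f_j \pmod{\big(\mathbf{p}^{(i,j)}_1\big)}$ automatically satisfies $f_i \equiv f_j \pmod{\big(\mathbf{p}^{(i,j)}_2\big)}$, which is precisely the statement that $\mathbf{p}^{(i,j)}_2$ imposes no new constraint on $f_i$ beyond what $\mathbf{p}^{(i,j)}_1$ already imposes.

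The only point requiring any care, and in effect the main obstacle, is pinning down that ``consider the trail to determine $f_i$'' really does mean ``impose the single congruence $f_i \equiv f_j \pmod{\big(\mathbf{p}^{(i,j)}\big)}$'' and nothing more, so that the intermediate vertex labels along $\mathbf{p}^{(i,j)}_2$ do not contribute hidden extra conditions on $f_i$; this is already captured by the telescoping identity above, so once that step is recorded, the rest of the argument is the two-line divisibility comparison.
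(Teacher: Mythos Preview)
Your proposal is correct and follows essentially the same approach as the paper: the paper's proof is just the two-line version, observing that $\mathbf{p}^{(i,j)}_1 \subset \mathbf{p}^{(i,j)}_2$ forces $(\mathbf{p}^{(i,j)}_2)\mid(\mathbf{p}^{(i,j)}_1)$ and hence the spline condition from $\mathbf{p}^{(i,j)}_1$ already implies that from $\mathbf{p}^{(i,j)}_2$. You have simply unpacked the telescoping step and the meaning of ``consider the trail'' that the paper leaves implicit.
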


\begin{proof} It is clear that $(\textbf{p} ^{(i,j)} _2)$ divides $(\textbf{p} ^{(i,j)} _1)$ since $\textbf{p} ^{(i,j)} _1 \subset \textbf{p} ^{(i,j)} _2$. Hence the spline condition from $\textbf{p} ^{(i,j)} _2$ is already satisfied by the spline condition from $\textbf{p} ^{(i,j)} _1$ and we do not need to consider $\textbf{p} ^{(i,j)} _2$.
\end{proof}

In consideration of Lemma~\ref{sufflem}, we can refer zero trails as zero paths.

\begin{lem} Let $(G,\alpha)$ be an edge labeled graph and let $\emph{\textbf{p}} ^{(i,j)}$ be a $v_j$-trail of $v_i$ with $j < i$. If there exists a vertex $v_k$ on $\emph{\textbf{p}} ^{(i,j)}$ with $k < i$ then it is sufficient to consider the trail $\emph{\textbf{p}} ^{(i,k)} \subset \emph{\textbf{p}} ^{(i,j)}$ instead of $\emph{\textbf{p}} ^{(i,j)}$ to determine $f_i$.
\label{sufflem2}
\end{lem}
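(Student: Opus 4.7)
The plan is to show that the spline congruence imposed by the full trail $\textbf{p}^{(i,j)}$ is a formal consequence of the shorter congruence imposed by $\textbf{p}^{(i,k)}$ together with congruences already enforced by the partial spline available at the stage when we assign $f_i$. Concretely: trail congruences are transitive, and since all entries with index less than $i$ have already been fixed, the ``missing'' segment of the trail from $v_k$ to $v_j$ contributes a congruence that is automatically true.

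First I would decompose the trail as $\textbf{p}^{(i,j)} = \textbf{p}^{(i,k)} \cdot \textbf{p}^{(k,j)}$, where $\textbf{p}^{(k,j)}$ is the portion of $\textbf{p}^{(i,j)}$ running from $v_k$ to $v_j$. Since the set of edge labels appearing on $\textbf{p}^{(i,j)}$ is the union of those appearing on its two pieces, $(\textbf{p}^{(i,j)})$ divides both $(\textbf{p}^{(i,k)})$ and $(\textbf{p}^{(k,j)})$. Consequently, the sub-trail condition $f_i \equiv f_k \pmod{(\textbf{p}^{(i,k)})}$ automatically yields $f_i \equiv f_k \pmod{(\textbf{p}^{(i,j)})}$, and similarly any congruence $f_k \equiv f_j \pmod{(\textbf{p}^{(k,j)})}$ descends to a congruence modulo $(\textbf{p}^{(i,j)})$.

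Next I would invoke the inductive construction sketched in Example~\ref{exxx}: at the moment we come to assign $f_i$, both $f_k$ and $f_j$ (with $k,j < i$) have already been fixed as part of a valid partial spline, either as zeros below the flow-up index or from earlier steps of the procedure. In particular the single-edge spline conditions along $\textbf{p}^{(k,j)}$ already hold, and adding them by transitivity gives $f_k \equiv f_j \pmod{(\textbf{p}^{(k,j)})}$. Combining this with the congruence from the previous paragraph produces $f_i \equiv f_j \pmod{(\textbf{p}^{(i,j)})}$, which is precisely the condition contributed by $\textbf{p}^{(i,j)}$; hence imposing the sub-trail condition already forces it.

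The only delicate point is justifying that the congruence along $\textbf{p}^{(k,j)}$ is automatic, and this is exactly where the hypothesis $k < i$ (not just $k \leq i$) enters: it guarantees that $f_k$ has already been determined in compatibility with $f_j$. Beyond that observation, the argument is a short exercise in the divisibility of gcds and the transitivity of congruences, so I expect no serious obstacle once the decomposition of the trail is set up carefully.
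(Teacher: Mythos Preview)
Your overall strategy coincides with the paper's: split $\textbf{p}^{(i,j)}$ into $\textbf{p}^{(i,k)}$ and $\textbf{p}^{(k,j)}$, note that $(\textbf{p}^{(i,j)})$ divides both $(\textbf{p}^{(i,k)})$ and $(\textbf{p}^{(k,j)})$, and combine $f_i\equiv f_k$ with $f_k\equiv f_j$ to deduce $f_i\equiv f_j\pmod{(\textbf{p}^{(i,j)})}$.

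There is, however, a genuine gap in how you justify the congruence $f_k\equiv f_j\pmod{(\textbf{p}^{(k,j)})}$. You claim that ``the single-edge spline conditions along $\textbf{p}^{(k,j)}$ already hold'' and then chain them by transitivity. But the segment $\textbf{p}^{(k,j)}$ may pass through vertices $v_m$ with $m\geq i$; nothing rules out a trail such as $v_i,\,v_k,\,v_{i+3},\,v_j$. At the stage where $f_i$ is being assigned, no value $f_m$ with $m\geq i$ has been fixed, so the edge congruences along that portion of the trail are simply not available and transitivity cannot be invoked.

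The paper avoids this by appealing directly to the inductive construction rather than to edge-by-edge conditions. Since $k,j<i$, the later of the two values $f_k,f_j$ was assigned at a previous step, and at that step the trail $\textbf{p}^{(k,j)}$ (viewed as a trail from the larger-indexed vertex to the smaller) was among the trails whose congruence was imposed; hence $f_k\equiv f_j\pmod{(\textbf{p}^{(k,j)})}$ holds by construction, regardless of which high-index vertices that trail traverses. The paper records this by a case split on whether $j>k$ or $k>j$. If you replace your transitivity step with this observation, the rest of your argument goes through and matches the paper's proof.
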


\begin{proof} We show that $f_i \equiv f_k \text{ mod } (\textbf{p} ^{(i,k)})$ implies $f_i \equiv f_j \text{ mod } (\textbf{p} ^{(i,j)})$. If $j > k$, then we already have $f_k \equiv f_j \text{ mod } (\textbf{p} ^{(k,j)})$ by the construction of $f_k$. Therefore $f_i - f_k = r_1 (\textbf{p} ^{(i,k)})$ and $f_k - f_j = r_2 (\textbf{p} ^{(k,j)})$ for some $r_1 , r_2 \in R$. Hence $f_i - f_j = r_1 (\textbf{p} ^{(i,k)}) - r_2 (\textbf{p} ^{(j,k)})$. Notice that $\textbf{p} ^{(i,k)} \cup \textbf{p} ^{(k,j)} = \textbf{p} ^{(i,j)}$, so $(\textbf{p} ^{(i,j)})$ divides $(\textbf{p} ^{(i,k)})$ and $(\textbf{p} ^{(k,j)})$. Thus we have $(\textbf{p} ^{(i,j)})$ divides $f_i - f_j$ and $f_i \equiv f_j \text{ mod } (\textbf{p} ^{(i,j)})$. If $k < j$, then $f_j \equiv f_k \text{ mod } (\textbf{p} ^{(j,k)})$ by the construction of $f_j$ and the proof follows similarly.
\end{proof}

As the main theorem of this section, we prove the existence of flow-up bases for $R_{(G,\alpha)}$ when the base ring $R$ is a PID.

\begin{theo} Let $(G,\alpha)$ has $n$ vertices and $R$ be a PID. Fix $v_i$ with $i > 1$ and assume that all vertices $v_j$ with $j < i$ are labeled by zero. Then a flow-up class $F^{(i)}$ exists with the first nonzero entry $f_i = \big[ \big\{ \big( \emph{\textbf{p}} ^{(i,0)} \big) \big\} \big]$.
\label{thm410}
\end{theo}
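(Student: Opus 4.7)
The plan is to construct the entries of $F^{(i)}$ inductively: set $f_j = 0$ for $j < i$, put $f_i = \bigl[\bigl\{(\textbf{p}^{(i,0)})\bigr\}\bigr]$, and then define $f_{i+1}, \ldots, f_n$ one at a time by simultaneously enforcing every trail-induced congruence to previously-defined vertices. The main tool will be the generalized Chinese Remainder Theorem for a PID: given finitely many congruences $x \equiv a_\ell \pmod{b_\ell}$, a common solution exists if and only if the pairwise compatibility $a_\ell \equiv a_{\ell'} \pmod{\gcd(b_\ell, b_{\ell'})}$ holds.

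For the base case, I would verify that $f_i = \bigl[\bigl\{(\textbf{p}^{(i,0)})\bigr\}\bigr]$ is compatible with the zero labels at $v_1, \ldots, v_{i-1}$. Every edge $e_{ij}$ from $v_i$ to a vertex $v_j$ with $j < i$ is itself a length-one zero trail of $v_i$, so its label $l_{ij}$ appears in the set $\bigl\{(\textbf{p}^{(i,0)})\bigr\}$ and hence divides $f_i$; thus $f_i \equiv 0 = f_j \pmod{l_{ij}}$.

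For the inductive step, assume $f_1, \ldots, f_{k-1}$ have been defined (with $k > i$) so that for every pair of indices $m_1 < m_2 \leq k-1$ and every $v_{m_1}$-trail $\textbf{p}^{(m_2, m_1)}$ of $v_{m_2}$, the congruence $f_{m_2} \equiv f_{m_1} \pmod{(\textbf{p}^{(m_2, m_1)})}$ holds. To define $f_k$, I would impose the simultaneous congruences $f_k \equiv f_j \pmod{(\textbf{p}^{(k,j)})}$ for each $v_j$-trail of $v_k$ with $j < k$, reducing to a finite subsystem via Lemmas~\ref{sufflem} and~\ref{sufflem2}. For pairwise compatibility of two such congruences indexed by $\textbf{p}^{(k,j_1)}$ and $\textbf{p}^{(k,j_2)}$, the two trails together form a trail from $v_{j_1}$ to $v_{j_2}$ through $v_k$ whose gcd equals $\gcd\bigl((\textbf{p}^{(k,j_1)}), (\textbf{p}^{(k,j_2)})\bigr)$. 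Assuming without loss of generality $j_1 < j_2$, this is precisely a $v_{j_1}$-trail of $v_{j_2}$ (with $v_k$ a higher-index intermediate vertex, so Lemma~\ref{sufflem2} does not prune it); the inductive hypothesis therefore already forced the required divisibility, and CRT produces $f_k$.

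The main obstacle will be the careful handling of the pairwise compatibility when the two trails $\textbf{p}^{(k,j_1)}$ and $\textbf{p}^{(k,j_2)}$ share common edges or vertices, so that their concatenation must be trimmed to an honest trail; one must verify that after this pruning the inductive congruence still yields the needed gcd condition. Lemmas~\ref{sufflem} and~\ref{sufflem2} are the tools that justify this reduction. Iterating the procedure up to $k = n$ produces the desired flow-up class in $\mathscr{F}_i$ with leading entry $f_i = \bigl[\bigl\{(\textbf{p}^{(i,0)})\bigr\}\bigr]$.
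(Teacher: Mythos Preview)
Your approach is essentially the paper's: build the entries $f_i, f_{i+1}, \ldots, f_n$ inductively, at each step invoking the generalized Chinese Remainder Theorem, and verify pairwise compatibility by observing that the concatenation of two trails out of $v_k$ contains a trail between their endpoints whose gcd is a multiple of the required modulus. The one organizational difference is that the paper first bundles all trails from $v_k$ to a fixed $v_j$ into a single congruence modulo $L_j = \bigl[\{(\textbf{p}^{(k,j)})\}\bigr]$ and then, to check $(L_{j_1}, L_{j_2}) \mid (f_{j_1} - f_{j_2})$, unpacks a prime-power factor of this gcd to locate two \emph{specific} trails witnessing it; by keeping one congruence per trail from the outset you bypass that prime-factorization detour, which is a mild streamlining but not a different argument. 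Your identified ``main obstacle'' (repeated edges in the concatenation) is exactly the point the paper handles with the phrase ``is either a trail \ldots\ or contains a trail'', and your suggested resolution via Lemma~\ref{sufflem} (the gcd of a subtrail is a multiple of the gcd of the full edge set) is the correct one.
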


\begin{proof}
Let $f_i = \big[ \big\{ \big( \emph{\textbf{p}} ^{(i,0)} \big) \big\} \big]$. We claim the existence of the entries $f_{i+1}, \ldots , f_n$. Assume that $G$ is connected. Otherwise we think of each connected component of $G$ seperately.

We use induction. The following modular equations yields the existence of $f_{i+1}$:
\begin{align*}
f_{i+1} &\equiv 0 \text{ mod } \big[ \big\{ (\textbf{p} ^{(i+1,0)}) \big\} \big] \\
f_{i+1} &\equiv f_i \text{ mod } \big[ \big\{ (\textbf{p} ^{(i+1,i)}) \big\} \big]
\end{align*}
Say $\big[ \big\{ (\textbf{p} ^{(i+1,0)}) \big\} \big] = *$ and $\big[ \big\{ (\textbf{p} ^{(i+1,i)}) \big\} \big] = **$. There exists $f_{i+1}$ satisfying these equations if $(* , **)$ divides $f_i$ by Chinese Remainder Theorem. To see that $(* , **)$ divides $f_i$, let $(* , **) = {p_1}^{\alpha_1} {p_2}^{\alpha_2} \ldots {p_m}^{\alpha_m}$. Choose ${p_j}^{\alpha_j}$ with $1 \leq j \leq m$. Then
\begin{displaymath}
{p_j}^{\alpha_j} \bigm\vert (* , **) \\ \Rightarrow \exists \alpha_{j_1} , \alpha_{j_2} \in R  \text{ such that } \hspace{.19cm} {p_j}^{\alpha_{j_1}} \bigm\vert *, \hspace{.19cm} {p_j}^{\alpha_{j_2}} \bigm\vert ** , \alpha_j = \text{min}(\alpha_{j_1} , \alpha_{j_2})
\end{displaymath}
and hence
\begin{align*}
{p_j}^{\alpha_{j_1}} \bigm\vert * \hspace{.19cm} &\Rightarrow \hspace{.19cm} \exists \textbf{p} ^{(i+1,0)} \hspace{.19cm} \text{ with } \hspace{.19cm} {p_j}^{\alpha_{j_1}} \bigm\vert (\textbf{p} ^{(i+1,0)}) \\
{p_j}^{\alpha_{j_2}} \bigm\vert ** \hspace{.19cm} &\Rightarrow \hspace{.19cm} \exists \textbf{p} ^{(i+1,i)} \hspace{.19cm} \text{ with } \hspace{.19cm} {p_j}^{\alpha_{j_2}} \bigm\vert (\textbf{p} ^{(i+1,i)}) .
\end{align*}
Since $\alpha_j = \text{min}(\alpha_{j_1} , \alpha_{j_2})$, we get
\begin{align*}
{p_j}^{\alpha_j} \bigm\vert (\textbf{p} ^{(i+1,0)}) \hspace{.19cm} \wedge \hspace{.19cm} {p_j}^{\alpha_j} \bigm\vert (\textbf{p} ^{(i+1,i)}) \hspace{.19cm} &\Rightarrow \hspace{.19cm} {p_j}^{\alpha_j} \bigm\vert \big( (\textbf{p} ^{(i+1,0)}) , (\textbf{p} ^{(i+1,i)}) \big)\vspace{.2cm} \\ &\Rightarrow \hspace{.19cm} {p_j}^{\alpha_j} \bigm\vert \big(\textbf{p} ^{(i+1,0)} \cup \textbf{p} ^{(i+1,i)} \big) .
\end{align*}
One can go from $v_i$ to $v_{i+1}$ by $\textbf{p} ^{(i+1,i)}$ and then goes from $v_{i+1}$ to $0$ by $\textbf{p} ^{(i+1,0)}$, so the union $\textbf{p} ^{(i+1,0)} \cup \textbf{p} ^{(i+1,i)}$ is either a zero trail of $v_i$ or contains a zero trail of $v_i$, say $\textbf{p} ^{(i,0)} \subseteq \textbf{p} ^{(i+1,0)} \cup \textbf{p} ^{(i+1,i)}$. The union may have some cycles, so it is not a zero trail of $v_i$, but contains a zero trail of $v_i$ in this case. Here the trail $\textbf{p} ^{(i+1,0)} \cup \textbf{p} ^{(i+1,i)}$ has greater or equal number of edges than the trail $\textbf{p} ^{(i,0)})$ and thus
\begin{displaymath}
{p_j}^{\alpha_j} \bigm\vert \big(\textbf{p} ^{(i+1,0)} \cup \textbf{p} ^{(i+1,i)} \big) \Bigm\vert (\textbf{p} ^{(i,0)}) \bigm\vert f_i .
\end{displaymath}
Since ${p_j}^{\alpha_j}$ is chosen arbitrary, we have ${p_j}^{\alpha_j} \bigm\vert f_i$ for all $1 \leq j \leq m$ and hence
\begin{displaymath}
( * , ** ) = \big[ {p_1}^{\alpha_1} , \ldots , {p_m}^{\alpha_m} \big] \bigm\vert f_i.
\end{displaymath}
This proves the existence of the entry $f_{i+1}$ and $f_{i+1}$ is unique up to modulo $[*,**]$. In other words, $f_{i+1}$ can be chosen as smallest relative to $f_i$.

As an induction hypothesis, suppose that there exist vertex labels $f_{i+2}, f_{i+3}, \ldots , f_{n-1}$ satisfying the spline conditions. The existence of the vertex label $f_n$ is related to the following modular equations:
\begin{align*}
f_{n} &\equiv 0 \text{ mod } \big[ \big\{  (\textbf{p} ^{(n,0)}) \big\} \big] \\
f_{n} &\equiv f_i \text{ mod } \big[ \big\{  (\textbf{p} ^{(n,i)}) \big\} \big] \\
f_{n} &\equiv f_{i+1} \text{ mod } \big[ \big\{ (\textbf{p} ^{(n,i+1)}) \big\} \big] \\
&\vdotswithin{=} \\
f_{n} &\equiv f_{n-1} \text{ mod } \big[ \big\{  (\textbf{p} ^{(n,n-1)}) \big\} \big]
\end{align*}
Let $L_t = \Big[ \Big\{   \big(\textbf{p} ^{(n,t)} \big) \Big\} \Big]$ for $t = 0,i,i+1,\ldots,n-1$. There exists $f_n$ satisfying these equations if
\begin{align}
f_j &\equiv 0 \text{ mod } \big( L_j , L_0 \big) \text{ for all } j \in \{i , i+1 , \ldots , n-1 \} \label{eq1}\\
f_j &\equiv f_k \text{ mod } \big( L_j , L_k \big) \text{ for all } j,k \in \{i , i+1 , \ldots , n-1 \} \label{eq2}
\end{align}
by Chinese Remainder Theorem. 

To conclude that Equation (3.1) holds, first take a factor $p^{\alpha}$ of $\big( L_j , L_0 \big)$ and see that  $p^{\alpha}$ divides $f_j$. Here notice that if $\textbf{p} ^{(n,j)}$ is a $v_j$ trail of $v_n$ and $\textbf{p} ^{(n,0)}$ is a zero trail of $v_n$, then $\textbf{p} ^{(n,j)} \cup \textbf{p} ^{(n,0)}$ is either a zero trail of $v_j$ or contains a zero trail of $v_j$. Hence one can conclude that Equation (3.2) holds by the same observation as in the proof of the existence of $f_{i+1}$. Similarly, notice that if $\textbf{p} ^{(n,j)}$ and $\textbf{p} ^{(n,k)}$ are $v_j$ and $v_k$-trails of $v_n$ respectively with $j,k \in \{i , i+1 , \ldots , n-1 \}$, then $\textbf{p} ^{(n,j)} \cup \textbf{p} ^{(n,k)}$ is either a $v_j$-trail of $v_k$ or contains a $v_j$-trail of $v_k$. So it can be shown that Equation (3.2) holds by taking a factor of $\big( L_j , L_k \big)$ and the same observation. Thus we conclude the existence of $f_n$.

As the last part of the proof, we need show that this construction gives a spline. Take an adjacent pair $v_i , v_j$ of vertices of $(G,\alpha)$ with $i < j$. The edge $e_{ij}$ is a $v_i$-trail of $v_j$. Then we have
\begin{displaymath}
f_j \equiv f_i \text{ mod } \big[ \big\{  (\textbf{p} ^{(j,i)}) \big\} \big]
\end{displaymath}
by construction and hence
\begin{displaymath}
l_{ij} \text{ $\vert$ } \big[ \big\{  (\textbf{p} ^{(j,i)}) \big\} \big] \text{ $\vert$ } f_j - f_i.
\end{displaymath}
\end{proof}

\begin{cor} Let $(G,\alpha)$ be an edge labeled graph with $n$ vertices. If the base ring $R$ is a PID, then there exists a flow-up basis $\{ F^{(1)} , \ldots , F^{(n)} \}$ where $F^{(i)} _i = \big[ \big\{ \big( \textbf{p} ^{(i,0)} \big) \big\} \big]$ for $1 < i \leq n$ and $F^{(1)} = (1 , \ldots , 1)$.
\end{cor}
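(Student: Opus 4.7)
The plan is to reduce the corollary directly to two results already proved in the section: Theorem~\ref{thm410}, which supplies individual flow-up classes with the prescribed leading entries, and Corollary~\ref{cor48}, which says that any other flow-up class has leading entry divisible by $\big[ \big\{ \big( \textbf{p}^{(i,0)} \big) \big\} \big]$. After producing the candidate set, I would verify it is a basis using the upper-triangular criterion of Theorem~\ref{basis}.

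First I would construct the set. Take $F^{(1)} = (1, \ldots, 1)$, which is the trivial spline. For each $i$ with $1 < i \leq n$, apply Theorem~\ref{thm410} after relabeling $v_1, \ldots, v_{i-1}$ by zero; this produces an element $F^{(i)} \in \mathscr{F}_i$ with $F^{(i)}_i = \big[ \big\{ \big( \textbf{p}^{(i,0)} \big) \big\} \big]$, as required. Note that the hypothesis of Theorem~\ref{thm410} allows precisely this relabeling, so nothing needs to be adjusted.

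Next I would establish the basis property. By Theorem~\ref{theorank} the module $R_{(G,\alpha)}$ has rank $n$, so by Theorem~\ref{ranktheo2} it is enough to show that $\{F^{(1)}, \ldots, F^{(n)}\}$ generates. Given any $G = (g_1, \ldots, g_n) \in R_{(G,\alpha)}$, subtract $g_1 F^{(1)}$ to obtain a spline whose first entry is $0$, hence an element of $\mathscr{F}_2$ (or the zero spline). By Corollary~\ref{cor48}, its second entry is divisible by $F^{(2)}_2$, so I can subtract an appropriate $R$-multiple of $F^{(2)}$ to kill the second coordinate. Iterating this upper-triangular elimination at each step exhausts $G$ after $n$ steps, expressing it as an $R$-linear combination of the $F^{(i)}$. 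Equivalently, this is precisely the criterion in Theorem~\ref{basis}(b): although that theorem is stated for $R = \mathbb{Z}$, the cascade in its proof uses only divisibility and so remains valid over an arbitrary PID.

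The main obstacle, which is in fact the heart of the section and already handled in Theorem~\ref{thm410}, is guaranteeing that the prescribed smallest value is actually attainable as a leading entry; once that is in hand, the corollary is essentially bookkeeping, because the optimality expressed in Corollary~\ref{cor48} forces the upper-triangular elimination to terminate with integer coefficients in $R$.
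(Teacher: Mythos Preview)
Your proposal is correct and follows essentially the same route as the paper, which simply states ``It follows directly from Theorem~\ref{basis}'': you have merely unpacked that one line by invoking Theorem~\ref{thm410} for the construction, Corollary~\ref{cor48} for the divisibility of leading entries, and then the upper-triangular elimination underlying Theorem~\ref{basis}. Your added remark that Theorem~\ref{basis}, though stated over $\mathbb{Z}$, carries over verbatim to any PID is a worthwhile clarification that the paper leaves implicit.
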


\begin{proof} It follows directly from Theorem~\ref{basis}.
\end{proof}

It is acceptable that using zero trails method for huge graphs is not practical, however a basis criteria for spline modules over some specific graph types is given in~\cite{Alt} by using this method. The element $f_i \in R$ defined in Theorem~\ref{thm410} has a crucial role to give the criteria. In~\cite{Alt}, we define the element $Q_G = \prod_{i = 1} ^n f_i \in R$ where $n = |V(G)|$ to determine whether a given set of splines forms a basis for $R_{(G, \alpha)}$ or not, where $G$ is a cycle, a diamond graph or a tree. As an open question, we claim that a basis criteria for any graph $G$ over a GCD domain can be given by the element $Q_G$.

We conclude that if $R$ is PID, then we can construct flow-up basis for $R_{(G, \alpha)}$. But if $R$ is not a PID, then $R_{(G, \alpha)}$ may not have a flow-up basis even it is free. The following example illustrates this case.

\begin{ex} Let $(G,\alpha)$ be as the figure below and let $R = k[x,y]$.

\begin{figure}[H]
\begin{center}
\scalebox{0.16}{\includegraphics{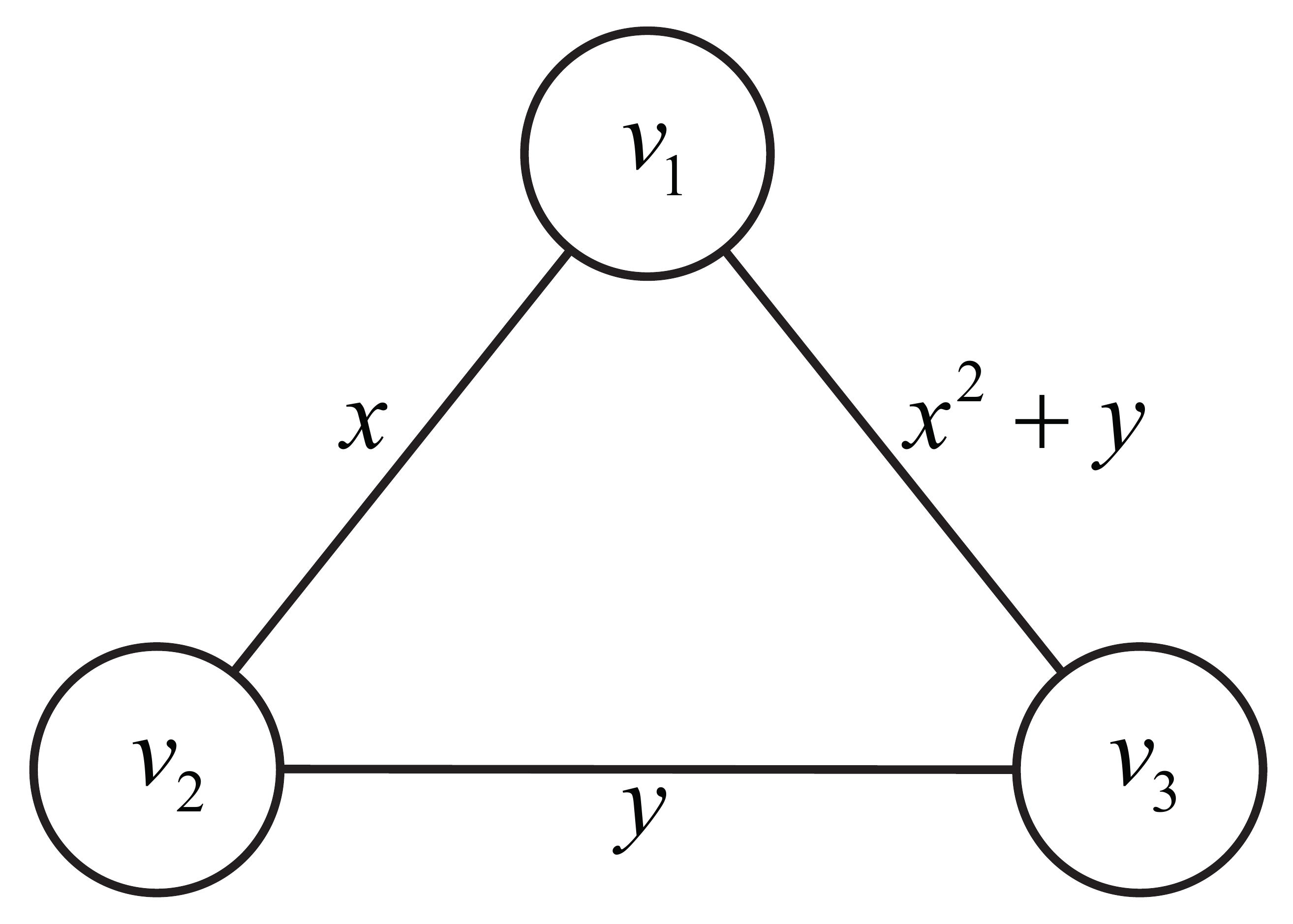}}
\caption{Edge labeled $3$-cycle}
\label{sp2112}
\end{center}
\end{figure}

\noindent
If we consider the flow-up class $F^{(2)} = (0,f_2,f_3)$ with smallest leading entry, we can compute\linebreak $\big[ \big\{ \big( \textbf{p} ^{(2,0)} \big) \big\} \big] = x$ but there is no $f_3 \in R$ satisfying the spline conditions for $f_2 = x$. It means that there is no flow-up basis for $R_{(G,\alpha)}$. For example,  choose a set $\mathcal{A}$ of flow-up classes
\begin{displaymath}
\mathcal{A} = \left\{ \begin{bmatrix} 1 \\ 1 \\ 1 \end{bmatrix} , \begin{bmatrix} 0 \\ xy \\ 0 \end{bmatrix} , \begin{bmatrix} x^2 y + y^2 \\ 0 \\ 0 \end{bmatrix} \right\} \subset R_{(G,\alpha)}.
\end{displaymath}
The set $\mathcal{A}$ is not a basis for $R_{(G,\alpha)}$. In order to see this, notice that $F = (0,x^2,x^2 + y) \in R_{(G,\alpha)}$ is a flow-up class and the first nonzero entry of $F$ is not a multiple of the first nonzero entry of $(0, xy , 0) \in \mathcal{A}$. Hence $\mathcal{A}$ is not a basis for $R_{(G,\alpha)}$ by Theorem~\ref{basis}. It is not so difficult to see that  the same result holds for other choice of  $\mathcal{A}$ for this example. 

We can compute $R$-module generators of $R_{(G,\alpha)}$ by Macaulay2 as
\begin{displaymath}
\mathcal{G} = \left\{ \begin{bmatrix} 1 \\ 1 \\ 1 \end{bmatrix} , \begin{bmatrix} 0 \\ y \\ x^2 + y \end{bmatrix} , \begin{bmatrix} 0 \\ xy \\ 0 \end{bmatrix} \right\}.
\end{displaymath}
Since $|\mathcal{G}| = |V(G)|$, we can conclude that $R_{(G,\alpha)}$ is a free $R$-module by Theorem~\ref{ranktheo2}. By using determinantal techniques with $Q_G=1\cdot x\cdot y(x^2+y) \in R$ discussed in~\cite{Alt} it can not only be shown that $\mathcal{G}$ is a $R$-module basis for $R_{(G,\alpha)}$  but also any flow-up classes are not a basis.
\end{ex}
Spline modules on trees always have a flow-up basis even $R$ is not a PID. For a detailed proof of this fact, see Theorem 4.1 in~\cite{Gil}.

In the next section we give an algorithm to determine flow-up classes on arbitrary ordered cycles.

\section{Flow-Up Classes on Arbitrary Ordered Cycles}
\label{arbordcyc}

Bowden, Hagen, King and Reinders studied flow-up classes on ordered cycles over $\mathbb{Z}$ in~\cite{Bow}. They gave explicit formulas for the entries of flow-up classes with smallest leading entry on ordered cycles. In this chapter we first give the result of Bowden, Hagen, King and Reinders. Their result does not hold for arbitrary ordered cycles. Then we give an algorithm to determine the entries of flow-up classes with smallest leading entry on arbitrary ordered cycles. This is an example of Section~\ref{Sflowup}.
In this section we assume that the base ring $R$ is a PID. Before we start to talk about flow-up classes on cycles, we first illustrate ordered and arbitrary ordered cycles in Figure~\ref{sp666}.

\begin{figure}[H]
\begin{center}
\scalebox{0.16}{\includegraphics{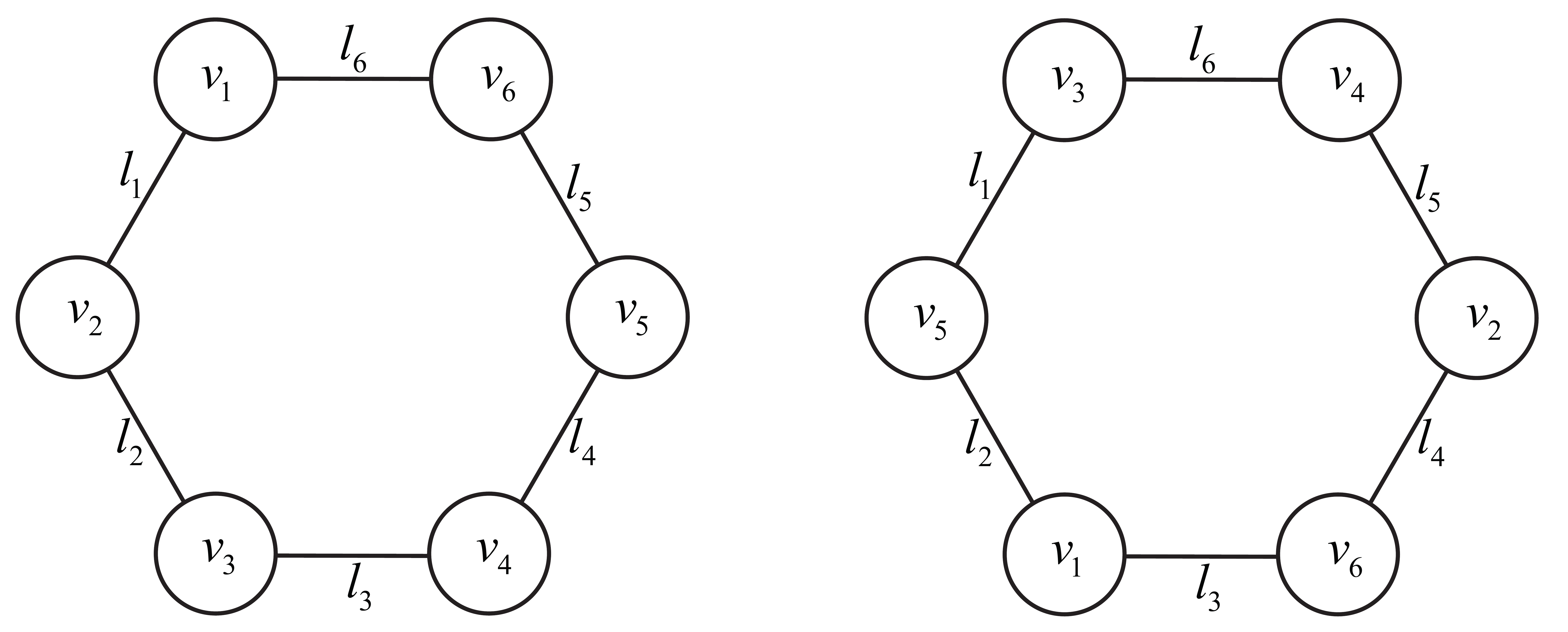}}
\caption{Ordered $6$-cycle (left) and arbitrary ordered cycle (right)}
\label{sp666}
\end{center}
\end{figure}

The following theorem is given for ordered cycles in~\cite{Bow}. \vspace{.3cm}

\begin{theo}~\cite{Bow} Fix an edge-labeled cycle $(C_n , \alpha)$. The vector $F^{(k)} = (0, \ldots , 0 , f_k , \ldots , f_n)$ for $1 < k \leq n$ has entries as follows:
\begin{itemize}
	\item $f_k = \big[ l_{k-1} , (l_k, \ldots ,l_n) \big]$.
	\item For $k < i \leq n$, if $\dfrac{l_{i-1}}{(l_{i-1}, \ldots ,l_n)} = 1$ then $f_i = (l_i , \ldots ,l_n)$.
	\item For $k < i \leq n$, if $\dfrac{l_{i-1}}{(l_{i-1}, \ldots ,l_n)} \neq 1$ then
	\begin{center}
	$f_i = f_{i-1} \cdot \dfrac{(l_i , \ldots ,l_n)}{(l_{i-1}, \ldots ,l_n)} \cdot \bigg( \dfrac{(l_i , \ldots ,l_n)}{(l_{i-1}, \ldots ,l_n)} \bigg)^{-1} _{\emph{ mod } \frac{l_{i-1}}{(l_{i-1}, \ldots ,l_n)}}$
	\end{center}
\end{itemize}
Then $F^{(k)} \in \mathbb{Z}_{(C_n , \alpha)}$.
\label{ordered}
\end{theo}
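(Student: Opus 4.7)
The plan is to verify that the vector $F^{(k)}$ defined by the formulas in the statement satisfies the spline conditions on every edge of the ordered cycle $C_n$. Writing $d_j := (l_j, l_{j+1}, \ldots, l_n)$, $q_i := d_i/d_{i-1}$, and $r_i := l_{i-1}/d_{i-1}$, the prescription reads $f_k = [l_{k-1}, d_k]$; for $i > k$, $f_i = d_i$ when $r_i = 1$, and $f_i = f_{i-1}\, q_i\, u_i$ otherwise, where $u_i$ represents $q_i^{-1} \pmod{r_i}$. The two structural identities I would record first are $d_{i-1} = (l_{i-1}, d_i)$, which implies the chain $d_k \mid d_{k+1} \mid \cdots \mid d_n = l_n$ and also gives $\gcd(q_i, r_i) = 1$, so that the modular inverse $u_i$ is well-defined.

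With $v_1, \ldots, v_{k-1}$ labeled zero, the nontrivial spline conditions are: (i) $l_{k-1} \mid f_k$, for the edge joining $v_{k-1}$ to $v_k$; (ii) $l_i \mid f_{i+1} - f_i$ for $k \leq i \leq n-1$; and (iii) $l_n \mid f_n$, for the closing edge back to $v_1 = 0$. Condition (i) is immediate from $l_{k-1} \mid [l_{k-1}, d_k]$. The key auxiliary invariant I would prove by induction on $i$ is $d_i \mid f_i$ for all $k \leq i \leq n$: the base $i = k$ is the lcm definition; in Case 1 at step $i+1$ we have $f_{i+1} = d_{i+1}$ trivially; in Case 2, $f_{i+1} = f_i\, q_{i+1}\, u_{i+1}$, and $d_i \mid f_i$ combined with $d_i\, q_{i+1} = d_{i+1}$ yields $d_{i+1} \mid f_{i+1}$. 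Setting $i = n$ handles condition (iii) since $d_n = l_n$.

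The main step is (ii). Fix $k \leq i \leq n-1$. In Case 1 for $f_{i+1}$, we have $l_i = d_i$ and $f_{i+1} = d_{i+1}$, so $d_i \mid d_{i+1}$ together with the invariant $d_i \mid f_i$ gives $l_i \mid f_{i+1} - f_i$. In Case 2, use $r_{i+1} \mid q_{i+1}\, u_{i+1} - 1$ to write $q_{i+1}\, u_{i+1} - 1 = r_{i+1}\, s$ for some $s \in R$, so that $f_{i+1} - f_i = f_i\, r_{i+1}\, s$; the invariant $d_i \mid f_i$ combined with $l_i = d_i\, r_{i+1}$ then produces $l_i \mid f_{i+1} - f_i$. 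The main obstacle is bookkeeping: keeping the two cases aligned across the induction, and invoking the identity $d_{i-1} = (l_{i-1}, d_i)$ at each step to secure simultaneously the existence of $u_i$ and the factorization $l_{i-1} = d_{i-1}\, r_i$ used in the divisibility argument. That $f_k = [l_{k-1}, d_k]$ is the smallest admissible leading entry follows from Corollary~\ref{cor48}, since the two zero trails of $v_k$ on the cycle have gcds $l_{k-1}$ and $d_k$ respectively.
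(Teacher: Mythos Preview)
Your argument is correct. The paper itself does not supply a proof of this theorem: its entire proof reads ``See Definition~4.1 and Theorem~4.2 in~\cite{Bow}.'' So there is no in-paper argument to compare against; you have filled in what the paper outsources.

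The substance of your verification is sound. The identity $d_{i-1}=(l_{i-1},d_i)$ does give both the divisibility chain $d_k\mid d_{k+1}\mid\cdots\mid d_n=l_n$ and $\gcd(q_i,r_i)=1$, so the inverse $u_i$ exists. The inductive invariant $d_i\mid f_i$ is the right engine: in Case~1 it is immediate, and in Case~2 the factor $q_{i+1}=d_{i+1}/d_i$ promotes $d_i\mid f_i$ to $d_{i+1}\mid f_{i+1}$. The edge check (ii) in Case~2 is exactly $f_{i+1}-f_i=f_i(q_{i+1}u_{i+1}-1)=f_i r_{i+1}s$, and combining $d_i\mid f_i$ with $l_i=d_i r_{i+1}$ finishes it. Conditions (i) and (iii) fall out as you say. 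Your closing sentence about minimality via Corollary~\ref{cor48} is accurate but goes a little beyond what the theorem asserts, which is only that $F^{(k)}\in\mathbb{Z}_{(C_n,\alpha)}$; the minimality is used separately by the paper (and by~\cite{Bow}) when invoking Theorem~\ref{basis} to conclude the $F^{(k)}$ form a basis.
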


\begin{proof} See Definition 4.1 and Theorem 4.2 in~\cite{Bow}.
\end{proof}

Bowden, Hagen, King and Reinders showed that the vector $F^{(k)} \in \mathbb{Z}_{(C_n , \alpha)}$ and then they proved that the set $\{ F^{(1)} , \ldots , F^{(n)} \}$ form a basis for the set of splines on $(C_n , \alpha)$ where $F^{(1)} = (1,\ldots ,1)$. They used the fact that on an ordered cycle all entries of a flow-up class can be determined by two modular equations in Equation~\ref{eq3}. If $F^{(i)} = (0 , \ldots , 0 , f_i , \ldots , f_n)$ is an $i$-th flow-up class on an ordered cycle, then for $k = i$ the leading entry $f_i = \big[ l_{i-1} , (l_i, \ldots ,l_n) \big]$ and the other entries $f_k (k> i)$can be determined inductively by the following two modular equations.
\begin{equation}
\begin{aligned}
f_k &\equiv f_{k-1} \text{ mod } l_{k-1} \\
f_k &\equiv 0 \text{ mod } (l_k, \ldots , l_n).
\end{aligned}
\label{eq3}
\end{equation}
We will give a method to determine the entries of a flow-up class with the smallest leading entry on an arbitrary ordered cycle. We first give an algorithm which simplifies the computations.

\begin{alg} Let $C_n$ be an arbitrary cycle and $(C_n , \alpha)$ be an edge labeled graph. Let $F^{(i)} = (0 , \ldots , 0 , f_i , \ldots , f_n)$ be an $i$-th flow-up class with $i \geq 3$. Then
\begin{enumerate}[label=\emph{(\arabic*)}]
	\item Connect the nearest left and the nearest right zero labeled vertices to $v_i$ by an edge. If these two vertices are not adjacent then this operation splits $(C_n , \alpha)$ into two cycles. Otherwise go to (3).
	\item Label all vertices contained in the cycle that not contain $v_i$ by zero.
	\item Contract $(C_n , \alpha)$ by combining all zero labeled vertices and form a new cycle that contain $v_i$ with a single zero labeled vertex.
	\item Fix $f_i = \big[ \big\{ \big( \textbf{p} ^{(i,0)} \big) \big\} \big]$ on the new cycle and determine the other vertex labels by starting the smallest index to the largest one on the new cycle by using Proposition~\ref{suffprop} below and the Chines Remainder Theorem.
\end{enumerate}
\label{alg10}
\end{alg}

As a result of Algorithm~\ref{alg10}, we reduce the problem of finding the entries of a flow-up class on $C_n$ to a cycle with one single zero labeled vertex. The next proposition illustrates that it is sufficient to consider two trails to determine the entries of a flow-up class on an arbitrary ordered cycle. We can also give a generalization of the formula in Theorem~\ref{ordered} as follows.

\begin{prop} Let $(C_n , \alpha)$ be an edge labeled graph with $C_n$ is an arbitrary ordered cycle. Let $F^{(i)}$ be an $i$-th flow-up class  and $v_j$ be a vertex with $j \geq i$. Then it is sufficient to consider the trails $\emph{\textbf{p}} ^{(j,j_l)}$ and $\emph{\textbf{p}} ^{(j,j_r)}$ to determine $f_j$ with $i \leq j \leq n$ inductively, where $v_{j_l}$ is the nearest left vertex to $v_j$ with $j_l < j$ and $v_{j_r}$ is the nearest right vertex to $v_j$ with $j_r < j$.
\label{suffprop}
\end{prop}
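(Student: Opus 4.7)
The plan is to exploit the cycle structure: between any two vertices of $C_n$ there are exactly two trails (the two arcs), so any trail from $v_j$ to a smaller-indexed vertex must begin by traversing one of these two arcs. Combined with Algorithm~\ref{alg10}, which has already contracted the graph to a cycle in which the only previously-labeled vertices at the stage of computing $f_j$ are the single zero vertex together with $v_i, v_{i+1}, \ldots, v_{j-1}$, this allows us to cut every relevant trail down to either $\textbf{p}^{(j,j_l)}$ or $\textbf{p}^{(j,j_r)}$ using Lemma~\ref{sufflem2}.

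First I would note that both $v_{j_l}$ and $v_{j_r}$ are well defined: starting at $v_j$ and walking around the cycle in either direction, we will encounter the zero vertex (which has the smallest index) before returning to $v_j$, so there is always a first vertex of index $< j$ on each arc. Call the two arcs emanating from $v_j$ the \emph{left arc} and the \emph{right arc}. Then any trail $\textbf{p}^{(j,m)}$ with $m < j$ coincides with an initial segment of one of the two arcs (since a trail repeats no edge, it cannot double back through $v_j$). If it lies on the left arc, the first smaller-indexed vertex it meets is necessarily $v_{j_l}$, so $\textbf{p}^{(j,j_l)} \subseteq \textbf{p}^{(j,m)}$; similarly for the right arc with $\textbf{p}^{(j,j_r)}$.

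Now Lemma~\ref{sufflem2} applies directly: the congruence $f_j \equiv f_m \pmod{\bigl(\textbf{p}^{(j,m)}\bigr)}$ is implied by the congruence coming from $\textbf{p}^{(j,j_l)}$ or $\textbf{p}^{(j,j_r)}$ (whichever arc $v_m$ lies on), provided $f_{j_l}$ and $f_{j_r}$ have been chosen earlier in the induction so as to be compatible with the conditions for intermediate vertices on the arcs. Thus in the Chinese Remainder Theorem step of Algorithm~\ref{alg10}(4), only the two modular equations
\begin{align*}
f_j &\equiv f_{j_l} \pmod{\bigl[\bigl\{\bigl(\textbf{p}^{(j,j_l)}\bigr)\bigr\}\bigr]},\\
f_j &\equiv f_{j_r} \pmod{\bigl[\bigl\{\bigl(\textbf{p}^{(j,j_r)}\bigr)\bigr\}\bigr]}
\end{align*}
need to be imposed, and their joint solvability follows by the same gcd-of-lcms argument as in the proof of Theorem~\ref{thm410}.

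The main obstacle is verifying compatibility at the CRT step: one must show that $\bigl(L_{j_l},L_{j_r}\bigr)$ divides $f_{j_l}-f_{j_r}$, where $L_{j_l}$ and $L_{j_r}$ abbreviate the two moduli above. The key geometric fact is that on a cycle, the union $\textbf{p}^{(j,j_l)} \cup \textbf{p}^{(j,j_r)}$ together with the remaining arc from $v_{j_l}$ to $v_{j_r}$ (not through $v_j$) covers every edge of $C_n$; hence any common prime power factor of $L_{j_l}$ and $L_{j_r}$ can be traced to a $v_{j_l}$-trail of $v_{j_r}$, which by the inductive construction divides $f_{j_l}-f_{j_r}$ exactly as in Theorem~\ref{thm410}. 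Once this is in hand, the proposition follows by induction on $j$, and the explicit formulas will come from writing out the CRT solution, generalizing the two-congruence recursion of Theorem~\ref{ordered}.
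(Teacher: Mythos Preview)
Your proof is correct and follows the paper's approach: any $v_k$-trail of $v_j$ with $k<j$ begins along one of the two arcs of the cycle and therefore contains $\textbf{p}^{(j,j_l)}$ or $\textbf{p}^{(j,j_r)}$ as an initial segment, whence the reduction lemma applies (you invoke Lemma~\ref{sufflem2}, which is indeed the pertinent one since the endpoints differ; the paper cites Lemma~\ref{sufflem}). Your final two paragraphs on CRT compatibility are correct but exceed the scope of the proposition---that solvability argument properly belongs to Theorem~\ref{formula} (and is in any case covered by the general existence Theorem~\ref{thm410}), whereas this proposition asserts only the reduction to the two trails.
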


\begin{proof} In order to determine $f_j$, we need to consider all $v_k$-trails of $v_j$ with $k \leq j$. Since $C_n$ is a cycle, such trails include either $\textbf{p}^{(j,j_l)}$ or $\textbf{p}^{(j,j_r)}$. Hence it is sufficient to consider $\textbf{p}^{(j,j_l)}$ and $\textbf{p}^{(j,j_r)}$ to determine $f_j$ by Lemma~\ref{sufflem}.
\end{proof}

\begin{theo} Let $(C_n , \alpha)$ be an edge labeled  arbitrary ordered cycle. Let $F^{(i)}$ be an $i$-th flow-up class  and $v_j$ be a vertex with $j \geq i$. The entry $f_j$ of $F^{(i)}$ can be given by the following formula:
\begin{itemize}
\item For $j = i$, $f_j = [(\textbf{p}^{(j,j_r)}) , (\textbf{p}^{(j,j_l)})]$.
\item For $j > i$, $f_j = f_{j_r} + (\textbf{p}^{(j,j_r)}) \cdot \dfrac{f_{j_l} - f_{j_r}}{d} \cdot \left[ \dfrac{(\textbf{p}^{(j,j_r)})}{d} \right]^{-1} _{\emph{ mod } \frac{(\textbf{p}^{(j,j_l)})}{d}}$
\end{itemize}
where $d = \Big( (\textbf{p}^{(j,j_r)}) , (\textbf{p}^{(j,j_l)}) \Big)$.
\label{formula}
\end{theo}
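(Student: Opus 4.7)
The plan is to combine Algorithm~\ref{alg10}, Proposition~\ref{suffprop}, and the Chinese Remainder Theorem, proceeding by induction on $j$. After invoking Algorithm~\ref{alg10}, I may assume the cycle has been reduced to one containing $v_i$ together with a single zero-labeled vertex, so that for each $j \geq i$ the neighbors $v_{j_l}$ and $v_{j_r}$ have already been assigned values (either $0$ for the base case or inductively for $j > i$). Proposition~\ref{suffprop} then tells us that the only constraints on $f_j$ come from the two trails $\mathbf{p}^{(j,j_l)}$ and $\mathbf{p}^{(j,j_r)}$, so the whole problem reduces to solving a two-congruence system.

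For the base case $j = i$, both $v_{i_l}$ and $v_{i_r}$ carry the label $0$, so the conditions become $f_i \equiv 0 \pmod{(\mathbf{p}^{(i,i_l)})}$ and $f_i \equiv 0 \pmod{(\mathbf{p}^{(i,i_r)})}$. Corollary~\ref{cor48} forces $[(\mathbf{p}^{(i,i_r)}),(\mathbf{p}^{(i,i_l)})]$ to divide $f_i$, and Theorem~\ref{thm410} guarantees that this lower bound is attained because $R$ is a PID. So the formula $f_i = [(\mathbf{p}^{(i,i_r)}),(\mathbf{p}^{(i,i_l)})]$ follows immediately.

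For the inductive step $j > i$, write $a = (\mathbf{p}^{(j,j_r)})$, $b = (\mathbf{p}^{(j,j_l)})$, and $d = \gcd(a,b)$. The system
\begin{align*}
f_j &\equiv f_{j_r} \pmod{a}, \\
f_j &\equiv f_{j_l} \pmod{b}
\end{align*}
is solvable by CRT exactly when $f_{j_l} \equiv f_{j_r} \pmod{d}$, and this compatibility condition is the one real obstacle in the argument. The plan is to verify it using the same cycle-union trick from the proof of Theorem~\ref{thm410}: the union $\mathbf{p}^{(j,j_l)} \cup \mathbf{p}^{(j,j_r)}$ is (or contains) a $v_{j_l}$-trail of $v_{j_r}$, so every prime-power factor of $d$ also divides some GCD of edge labels on a trail connecting $v_{j_l}$ and $v_{j_r}$, whence $d$ divides the modulus of the congruence $f_{j_l} \equiv f_{j_r}$ already established in the inductive construction.

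Once compatibility is secured, the explicit CRT recipe finishes the proof. Setting $f_j = f_{j_r} + a\,k$ satisfies the first congruence automatically; the second becomes $(a/d)\,k \equiv (f_{j_l}-f_{j_r})/d \pmod{b/d}$, and since $\gcd(a/d, b/d) = 1$ the element $(a/d)^{-1}$ modulo $b/d$ exists. Solving for $k$ and substituting yields
\[
f_j = f_{j_r} + (\mathbf{p}^{(j,j_r)}) \cdot \frac{f_{j_l} - f_{j_r}}{d} \cdot \left[\frac{(\mathbf{p}^{(j,j_r)})}{d}\right]^{-1}_{\text{ mod } \frac{(\mathbf{p}^{(j,j_l)})}{d}},
\]
which is exactly the claimed expression. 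A brief verification that this $f_j$ satisfies both original congruences (reducing it modulo $a$ gives $f_{j_r}$; reducing modulo $b$ gives $f_{j_l}$ after cancelling $(a/d)$ against its inverse) completes the induction.
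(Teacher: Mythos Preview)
Your proof is correct and follows essentially the same route as the paper's: reduce via Proposition~\ref{suffprop} to the two congruences $f_j\equiv f_{j_r}\pmod{(\textbf{p}^{(j,j_r)})}$ and $f_j\equiv f_{j_l}\pmod{(\textbf{p}^{(j,j_l)})}$, then solve explicitly by writing $f_j=f_{j_r}+s\cdot(\textbf{p}^{(j,j_r)})$, dividing through by $d$, and inverting $(\textbf{p}^{(j,j_r)})/d$ modulo $(\textbf{p}^{(j,j_l)})/d$. If anything you are slightly more careful than the paper, which divides by $d$ without pausing to justify the compatibility $d\mid f_{j_l}-f_{j_r}$; your appeal to the union-of-trails argument from Theorem~\ref{thm410} (or simply to the existence guaranteed there) fills that small gap.
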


\begin{proof} If $j = i$, we obtain
\begin{displaymath}
\begin{aligned}
f_j &\equiv 0 \text{ mod } (\textbf{p}^{(j,j_r)})\\
f_j &\equiv 0 \text{ mod } (\textbf{p}^{(j,j_l)})
\end{aligned}
\end{displaymath}
and we can set $f_j = [(\textbf{p}^{(j,j_l)}) , (\textbf{p}^{(j,j_r)})]$. 

Assume that $j > i$, then $f_j$ can be given by two modular forms as
\begin{displaymath}
\begin{aligned}
f_j &\equiv f_{j_r} \text{ mod } (\textbf{p}^{(j,j_r)}) \\
f_j &\equiv f_{j_l} \text{ mod } (\textbf{p}^{(j,j_l)}).
\end{aligned}
\end{displaymath}
by Lemma~\ref{sufflem}. We can rewrite this system of congruences as $f_j = s \cdot (\textbf{p}^{(j,j_r)}) + f_{j_r} \equiv f_{j_l} \text{ mod } (\textbf{p}^{(j,j_l)})$. Notice that $f_{j_r}$ and $f_{j_l}$ are already determined since $j_r , j_l < j$. Then we have 
\begin{displaymath}
s \cdot (\textbf{p}^{(j,j_r)}) \equiv f_{j_l} - f_{j_r} \text{ mod } (\textbf{p}^{(j,j_l)}).
\end{displaymath}
Let $d$ be the greatest common divisor of $(\textbf{p}^{(j,j_l)})$ and $(\textbf{p}^{(j,j_r)})$, then 
\begin{displaymath}
s \cdot \dfrac{(\textbf{p}^{(j,j_r)})}{d} \equiv \dfrac{f_{j_l} - f_{j_r}}{d} \text{ mod } \dfrac{(\textbf{p}^{(j,j_l)})}{d}.
\end{displaymath}
By multiplying this equation with the inverse of $(\textbf{p}^{(j,j_r)})$ modulo $(\textbf{p}^{(j,j_l)})$, we obtain
\begin{displaymath}
s \equiv \dfrac{f_{j_l} - f_{j_r}}{d} \left[ \dfrac{(\textbf{p}^{(j,j_r)})}{d} \right]^{-1} \text{ mod } \dfrac{(\textbf{p}^{(j,j_l)})}{d}.
\end{displaymath}
Finally we can get the formula for $f_j$ by setting $s = \left[ \dfrac{(\textbf{p}^{(j,j_r)})}{d} \right]^{-1} _{\text{ mod } \frac{(\textbf{p}^{(j,j_l)})}{d}}$.
\end{proof}

The following example is an application of Algorithm~\ref{alg10} and above observation.

\begin{ex}Consider the arbitrary ordered $8$-cycle in Figure~\ref{sp84}.

\begin{figure}[H]
\begin{center}
\scalebox{0.16}{\includegraphics{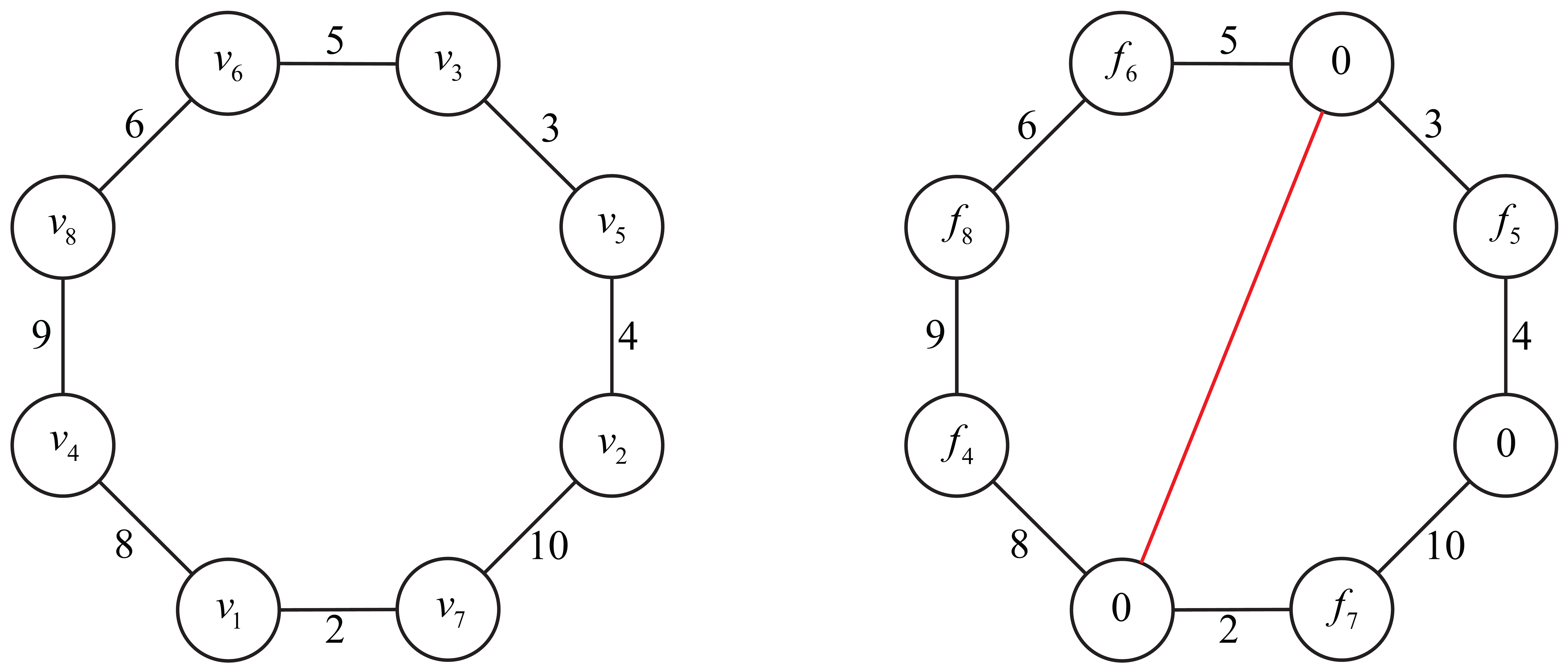}}
\caption{Arbitrary ordered $C_8$ and $F{(4)}$}
\label{sp84}
\end{center}
\end{figure}

\noindent
Let $F^{(4)} \in \mathscr{F}_4$. The nearest left and right zero labeled vertices to $v_4$ are $v_1$ and $v_3$. By Algorithm~\ref{alg10}, we first connect $v_1$ and $v_3$. This gives us two new cycles. We consider the cycle which does not contain $v_4$. Then we label the vertices of this cycle by zero. In this case, $f_5 = f_7 = 0$. We obtain a new cycle containing $v_4$ by combining $v_1$ and $v_3$ as in Figure~\ref{sp83}.
\begin{figure}[H]
\begin{center}
\scalebox{0.16}{\includegraphics{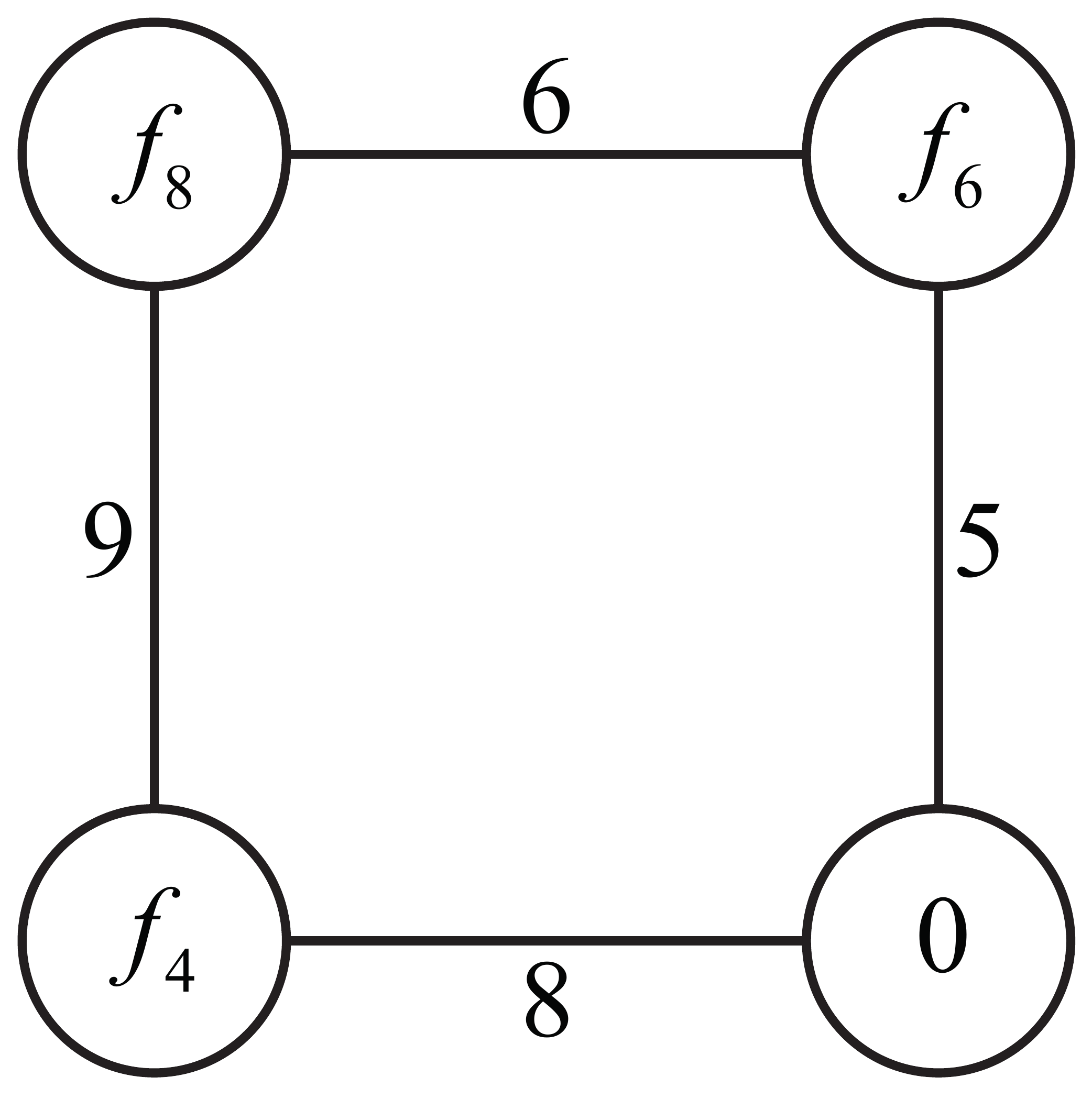}}
\caption{Arbitrary ordered $C_4$}
\label{sp83}
\end{center}
\end{figure}
\noindent
We fix $f_4 = [8 , (9 , 6 , 5)] = 8$ by zero trails. For $f_6$, we have
\begin{displaymath}
\begin{aligned}
f_6 &\equiv 2 \text{ mod } 3 \\
f_6 &\equiv 0 \text{ mod } 5 .
\end{aligned}
\end{displaymath}
The inverse of $3$ modulo $5$ is $2$ and $d = (3,5) = 1$, so we have 
\begin{displaymath}
f_6 = 2 + 3 \cdot (-2) \cdot 2 = -10 \equiv 5 \text{ mod } 15
\end{displaymath}
and we can set $f_6 = 5$. For $f_8$, we have
\begin{displaymath}
\begin{aligned}
f_8 &\equiv 8 \text{ mod } 9 \\
f_8 &\equiv 5 \text{ mod } 6 .
\end{aligned}
\end{displaymath}
Here $d = (9,6) = 3$ and the inverse of $9/3 = 3$ modulo $6/3 = 2$ is $1$, thus
\begin{displaymath}
f_8 = 8 + (-1) \cdot 9 = -1 \equiv 17 \text{ mod } 18
\end{displaymath}
and we can assign $f_8 = 17$. Hence we get $F^{(4)} = (0,0,0,8,0,5,0,17)$. Similarly, we can easily obtain

\begin{displaymath}
\begin{aligned}
F^{(5)}& = (0,0,0,0,12,0,0,0)\\
F^{(6)}& = (0,0,0,0,0,15,0,0)\\
F^{(7)} &= (0,0,0,0,0,0,20,0)\\
F^{(8)}& = (0,0,0,0,0,0,0,18).
\end{aligned}
\end{displaymath}

\end{ex}

\end{document}